
\documentclass[archive,article,ij4uq]{ij4uq}      


\usepackage[hang]{footmisc}
\setlength{\footnotemargin}{0in}
\frenchspacing
\fancypagestyle{plain}{%
  \fancyhf{}
  \fancyhead[R]{\small }
  \fancyfoot[R]{\small\bf\thepage }
  \fancyfoot[L]{\fottitle}
  }

\newcommand{\RR}{I\!\!R} 
\newcommand{\Nat}{I\!\!N} 
\newcommand{\Var}{\mathbb{V}}
\newcommand{\Mean}{\mathbb{E}}

\usepackage{amsfonts,amsthm}

\usepackage{tikz} 
\usetikzlibrary{matrix,chains,positioning,decorations.pathreplacing,arrows} 
\usetikzlibrary{positioning}
\usepackage{hyperref}

\usepackage{cleveref}
\usepackage{natbib}


\newtheorem{prop}[theorem]{Proposition}
\newtheorem{coro}[theorem]{Corollary}
\usepackage{multicol}

\begin{document}

\volume{} 
\title{A Bayesian neural network approach to Multi-fidelity surrogate modelling}
\titlehead{Muti fidelity surrogate model with GP-BNN}
\authorhead{B. Kerleguer, C. Cannamela, \& J. Garnier}
\corrauthor[1,2]{Baptiste Kerleguer}
\author[1]{Claire Cannamela}
\author[2]{Josselin Garnier}
\corremail{baptiste.kerleguer@polytechnique.edu}
\corraddress{CEA, DAM, DIF, F-91297, Arpajon, France}
\address[1]{CEA, DAM, DIF, F-91297, Arpajon, France}
\address[2]{Centre de Math\'ematiques Appliqu\'ees, Ecole Polytechnique, Institut Polytechnique de Paris, 91128 Palaiseau Cedex, France}

\dataO{mm/dd/yyyy}
\dataO{}
\dataF{mm/dd/yyyy}
\dataF{}

\abstract{
  This paper deals with surrogate modelling of a computer code output in a hierarchical multi-fidelity context, i.e., when the output can be evaluated at different levels of accuracy and computational cost.
Using observations of the output at low- and high-fidelity levels, we propose a method that combines Gaussian process (GP) regression and Bayesian neural network (BNN), in a method called GPBNN.
The low-fidelity output is treated as a single-fidelity code using classical GP regression. 
The high-fidelity output is approximated by a BNN that incorporates, in addition to the high-fidelity observations, well-chosen realisations of the low-fidelity output emulator.
The predictive uncertainty of the final surrogate model is then quantified by a complete characterisation of the uncertainties of the different models and their interaction.
GPBNN is compared with most of the multi-fidelity regression methods allowing to quantify the prediction uncertainty.
}

\keywords{Multi-fidelity, Surrogate modelling, Bayesian Neural Network, Gaussian Process Regression}

\maketitle

\section{Introduction}
\label{sec::Intro}

We consider the situation in which two levels of code that simulate the same system have different costs and accuracies.
This framework is called hierarchical multi-fidelity.
We would like to build a surrogate model of the most accurate and most costly code level, also called high-fidelity code. 
The underlying motivation is to carry out an uncertainty propagation study or a sensitivity analysis that require many calls and therefore, the substitution of the high-fidelity code by a surrogate model with quantified prediction uncertainty is necessary.
To build  the surrogate, a small number $N_H$ of high-fidelity code outputs and a large number $N_L$ of low-fidelity code outputs are given.
In some applications we may have $N_L \gg N_H$, but this article will focus on $N_L > N_H$ and the context of small data.
Then, the low-fidelity surrogate model uncertainty must be taken into account.

A well-known method to build a surrogate model with uncertainty quantification is Gaussian process (GP) regression, GP 1F in this paper.
This method has become popular in computer experiments \cite{williams2006gaussian,santner2003design} and now allows scaling up in the number of learning points \cite{rulliere2018nested}.
The emergence of multi-fidelity codes (codes that can be run at different levels of accuracy and cost) has motivated the introduction of new GP regression approaches.
The first one was the Gaussian process auto-regressive or AR(1) scheme proposed by \cite{kennedy2000predicting}.
The form of the AR(1) model expresses a simple and linear relationship between the codes and it follows from a Markov property \cite{o1998markov}.
This method is used in \cite{forrester2007multi} for optimisation.
This approach has been improved by \cite{le2014recursive} with the decoupling of the recursive estimation of the hyper-parameters of the different code levels.
In the following the recursive AR(1) model is called the AR(1) model because the results do not change and only the computation time is reduced.
The Deep GP method introduced in \cite{perdikaris2017nonlinear} makes it possible to adapt the approach to cases in which the relationships between the code levels are nonlinear.
An improvement has been further made by adding to the covariance function of the high-fidelity GP proposed by \cite{perdikaris2017nonlinear} a linear kernel in \cite{cutajar2019deep}.
Multi-fidelity GP regression has been used in several fields as illustrated in \cite{song2019general,pilania2017multi}.
Multi-fidelity polynomial chaos expension (MF-PCE) can also be exploited as in \cite{ng2012multifidelity} for multi-fidelity regression.
MF-PCE are mainly used for sensitivity analysis, see \cite{palarGlobalSensitivityAnalysis2018}, but show their limitations for surrogate modelling in terms of uncertainty quantification.

Recent improvements in the implementation of neural networks have motivated research on multi-fidelity neural networks \cite{NEURIPS2020_60e1deb0}.
In \cite{meng2020composite}, the authors combine a fully connected neural network (NN) and a linear system for the interactions between codes.
The low-fidelity surrogate model is built using a fully connected NN, see \cite{ZHANG2021113485} for a direct application.
In order to quantify the prediction deviations and to evaluate the reliability of the prediction the NNs have been improved to become Bayesian Neural Networks (BNN) \cite{mackay1992practical}.
The multi-fidelity model has been improved by using BNN for high-fidelity modelling in \cite{meng2020multi}.
In our article this method will be called MBK.
The method in \cite{meng2020multi} offers options for single-fidelity active learning and is more general for multi-fidelity modelling.
The disadvantages of methods using NN are the non-prediction of the model uncertainties and the difficult optimisation of the hyper-parameters in a small data context.
These disadvantages can be overcome by the use of BNNs.
The ability of BNN for uncertainty quantification is explained in \cite{kabir2018neural}.

The purpose of our paper is to present a method competing in terms of prediction with the methods of multi-fidelity regression : AR(1) model, DeepGP and MBK.
Our paper also aims to improve the quantification of uncertainties in the non-linear case for all these methods.
As not all the proposed methods give Gaussian processes as outputs, the uncertainties have to be quantified in an appropriate way through prediction intervals.
An approach, combining two methods, GP regression and BNN, is proposed in our article.
We will use a Gauss-Hermite quadrature based method to transfer the low-fidelity GP a posteriori law to the high-fidelity BNN.
We will compare the properties and results of our strategy with the ones of these methods.
For evaluation, we examine our surrogate model approach with two-level multi-fidelity benchmark functions and with a simulation example.
The results demonstrate that the method presented in our paper is easier to train and more accurate than any other multi-fidelity neural network-based method.
Moreover, the method is more flexible compared to other GP-based methods, and it gives reliable estimations of the predictive uncertainties.

In our paper we propose to split the multi-fidelity surrogate modelling problem into two regression problems.
The first problem is the single-fidelity regression for the low-fidelity code.
The second problem is the regression for the high-fidelity code knowing the predictions and the predictive uncertainties of the low-fidelity surrogate model.
GP regression allows prediction with quantified uncertainty for the low-fidelity code, which is important to minimise the predictive error and to quantify the predictive uncertainty of the surrogate model of the high-fidelity code, as we will see below.
As in \cite{meng2020multi}, we want to use a BNN for the regression knowing the low-fidelity prediction.
The contribution we propose is an original strategy to transfer the low-fidelity predictive uncertainties to the BNN.
For that we take well-chosen realisations of the predictor of the low-fidelity code by a quasi-Monte Carlo method based on Gauss-Hermite quadrature nodes, and we give them as inputs of the BNN in addition to the high-fidelity code inputs.
A predictor is obtained by a weighted average of the BNN outputs corresponding to the different realisations.
The predictive variance can also be assessed with the same sample.

The paper is organized as follows.
\Cref{sec::Method} presents different methods to build single-fidelity surrogate models.
The complete multi-fidelity method is presented in \cref{sec::Fullmeth}.
The specific interaction between GP regression and BNN is explained in \cref{sec::MCGPGH}.
\Cref{sec::Ex} shows numerical results.
Based on these results, the interest of the method is discussed in \cref{sec::Disc}.  


\section{Background: Regression with uncertainty quantification}
\label{sec::Method}
In this section classical surrogate modelling methods with uncertainty quantification are presented.
The GP regression method  is presented in \cref{ssec::GPreg}.
The BNN method is presented in \cref{ssec::BNN}.
Here we want to predict the scalar output $y=f (\mathbf{x})$, with $y\in \RR$, of a computer code with input $\mathbf{x} \in \RR^d $ from data set $(\mathbf{x}_i,y_i)_{i=1}^N$ with $y_i=f(\mathbf{x}_i)$.

\subsection{Gaussian process regression}
\label{ssec::GPreg}
GP regression can be used to emulate a computer code with uncertainty quantification \cite{williams2006gaussian}.
The prior output model as a function of the input $\mathbf{x}$  is a Gaussian process  $Y(\mathbf{x})$ with mean $\mu(\mathbf{x})$ and stationary covariance function $C(\mathbf{x},\mathbf{x}^\prime)$.
Consequently, the posterior distribution of the output $Y(\mathbf{x})$ given  $Y(\mathbf{x}_1)=y_1, \ldots , Y(\mathbf{x}_N)=y_N$ is Gaussian with mean:
\begin{equation}
\label{eq:meanpostL}
  \mu_\star(\mathbf{x}) = \mu(\mathbf{x}) + \mathbf{r}(\mathbf{x})^T {\bf C}^{-1} (\mathbf{y} - \boldsymbol{\mu}),
\end{equation}
and covariance :
\begin{equation}
\label{eq:varpostL}
  C_\star (\mathbf{x}, \mathbf{x^\prime}) = C(\mathbf{x}, \mathbf{x^\prime}) - \mathbf{r}(\mathbf{x})^T {\bf C}^{-1} \mathbf{r}(\mathbf{x^\prime}),
\end{equation}
with the vector $\mathbf{r}(\mathbf{x})=(C(\mathbf{x},\mathbf{x}_1), \ldots, C(\mathbf{x},\mathbf{x}_N))^T$, the matrix ${\bf C}$ defined by $C_{i,j} = C(\mathbf{x}_i,\mathbf{x}_j)$, the vector $\boldsymbol{\mu} = (\mu(\mathbf{x}_1), \ldots, \mu(\mathbf{x}_N))^T$ and the vector $\mathbf{y} = (y_1, \ldots, y_N)^T$.
The covariance function is chosen within a parametric family of kernels, whose parameters are fitted by maximizing the log marginal likelihood of the data, see \cite[Chapter~2.2]{williams2006gaussian}.
For practical applications the implementation of \cite[Algorithm~2.1]{williams2006gaussian} can be used.

\subsection{Bayesian neural network}
\label{ssec::BNN}
Neural networks have been used to emulate unknown functions based on data \cite{CIGIZOGLU200663}, and in particular computer codes \cite{TRIPATHY2018565}. 
Our goal, however, is also to quantify the uncertainty of the emulation.
BNN makes it possible to quantify predictive uncertainties.
Below we present the BNN structure and the priors for the parameters. 

We present a BNN with one hidden layer.
Let $N_l$ be the number of neurons in the hidden layer.
The output of the first layer is
\begin{equation}\label{eq::FLpred}
  \mathbf{y}_{1} = \Phi \left( \mathbf{w}_{1} \mathbf{x} + \mathbf{b}_{1} \right),
\end{equation}
with $\mathbf{x} \in \RR^{d}$ the input vector of the BNN, $\mathbf{b}_{1} \in \RR^{N_l}$ the bias vector, 
 $\mathbf{w}_1 \in \RR^{N_l \times d}$ the weight matrix and $\mathbf{y}_1 \in \RR^{N_l}$ the output of the hidden layer.
 The function $\Phi :\RR^{N_l} \to \RR^{N_l}$ is of the form $\Phi(\mathbf{b}) = (\phi(b_j))_{j=1}^{N_l}$, 
where the activation function $\phi$ can be hyperbolic tangent or ReLU for instance.
The second (and last) layer is fully linear:
\begin{equation}
  \label{eq::LLpred}
  \mathit{BNN}(\mathbf{x}) = \mathbf{w}_2^T \mathbf{y}_{1} + b_{2},
\end{equation}
with $\mathbf{w}_2 \in \RR^{N_n }$ the weight matrix, $b_2 \in \RR$ the bias vector and  $ \mathit{BNN}(\mathbf{x}) \in \RR$  the scalar output of the BNN at point $\mathbf{x}$.

We use a Bayesian framework similar to the one presented in \cite{jospin2020hands}.
Let $\boldsymbol{\theta}$ denote the parameter vector of the BNN, which is here
$  \boldsymbol{\theta}= \left( \mathbf{w}_i, \mathbf{b}_i\right)_{i=1,2}$.
The probability distribution function (pdf) of the output given $\mathbf{x}$ and $\boldsymbol{\theta}$ is 
\begin{eqnarray}
  p(y| \mathbf{x}, \boldsymbol{\theta}, \sigma) =
  \frac{1}{\sqrt{2\pi}\sigma} \exp\Big( - \frac{(y-  \mathit{BNN}_{\boldsymbol{\theta}}(\mathbf{x}))^2}{2 \sigma^2 } \Big),
  \label{eq::outputs}
\end{eqnarray}
where  $\sigma^2$ is  the variance of the random noise added to account for the fact that the neural network is an approximation.
$\mathit{BNN}_{\boldsymbol{\theta}}(\mathbf{x})$ is the output of the neural network with parameter $\boldsymbol{\theta}$ at point $\mathbf{x}$.

Here we choose a prior distribution for $\left(\boldsymbol{\theta},\sigma\right)$ that is classic in the field of BNN \cite[Part 5]{jospin2020hands}.
The prior laws of the parameters $\left( \mathbf{w}_i, \mathbf{b}_i\right)_{i=1,2}$ are:
\begin{eqnarray}
 \mathbf{w}_i \sim \mathcal{N}\left(\mathbf{0}, \sigma_{w_i}^2\mathbf{I} \right),\quad 
  \mathbf{b}_i \sim \mathcal{N}\left(\mathbf{0}, \sigma_{b_i}^2\mathbf{I} \right),   \quad i=1,2,
  \label{eq::priorTheta}
\end{eqnarray}
with  $\sigma_{{w,b}_{1,2}}$ the prior standard deviations.
The prior for $\sigma$ is the standard Gaussian $\mathcal{N}(0,1)$ (assuming the function $f$ has been normalized to be of order one).
All parameters are  assumed to be independent.

Applying Bayes' theorem, the posterior pdf of  $\left(\boldsymbol{\theta},\sigma\right)$ given the data ${\cal D}=(\mathbf{x}_i, y_i)_{i=1}^N$ is 
\begin{equation}
p(\boldsymbol{\theta},\sigma | {\cal D}) = \prod_{i=1}^N p(y_i|\mathbf{x}_i,\boldsymbol{\theta},\sigma ) p(\boldsymbol{\theta},\sigma)
\end{equation}
up to a multiplicative constant, where $p(\boldsymbol{\theta}, \sigma)$ is the prior distribution of $\boldsymbol{\theta}, \sigma$ described above.
The posterior distribution of the output at $\mathbf{x}$ has pdf
\begin{equation}
\label{eq:postp}
p(y | \mathbf{x} ,{\cal D}) = \iint p(y |\mathbf{x},\boldsymbol{\theta}, \sigma) p(\boldsymbol{\theta},\sigma|{\cal D}) d\boldsymbol{\theta}d\sigma ,
\end{equation}
and the two first moments are:
\begin{equation}
  \label{eq::baymeanvar}
  \Mean_{\text{post}}\left[Y\right] = \iint \mathit{BNN}_{\boldsymbol{\theta}}(\mathbf{x}) p(\boldsymbol{\theta}, \sigma |{\cal D}) d\boldsymbol{\theta} d\sigma,
\end{equation}
\vspace{-0.35cm}
\begin{equation}
  \label{eq::secondmombay}
  \Mean_{\text{post}}\left[Y^2\right] = \iint \big( \mathit{BNN}_{\boldsymbol{\theta}}(\mathbf{x})^2 + \sigma^2\big) p(\boldsymbol{\theta}, \sigma |{\cal D}) d\boldsymbol{\theta} d\sigma.
\end{equation}

Contrarily to GP regression, the prediction of a BNN cannot be expressed analytically as shown by (\ref{eq:postp}) but there exist efficient sampling methods.
To sample the posterior distribution of the BNN output, we need to sample the posterior distribution of $\left(\boldsymbol{\theta},\sigma\right)$.
In this paper the No-U-Turn Sampler (NUTS), which is a Hamiltonian Monte-Carlo (HMC) method, is used to sample the posterior distribution of $\left(\boldsymbol{\theta},\sigma\right)$ \cite{hoffman2014no}.
By \cref{eq::baymeanvar,eq::secondmombay}, the estimated mean  $\tilde{f}$ and variance $\tilde{V}$ of the output at point $\mathbf{x}$ are:
\begin{align}
  \label{eq::MeanBNN}
  \tilde{f}(\mathbf{x}) &= \frac{1}{N_v} \sum_{i=1}^{N_v} \mathit{BNN}_{\boldsymbol{\theta}_i}(\mathbf{x}),\\
  \label{eq::VarBNN}
  \tilde{V}(\mathbf{x})& = \frac{1}{N_v} \sum_{i=1}^{N_v} \left[\mathit{BNN}_{\boldsymbol{\theta}_i}(\mathbf{x})-\tilde{f}(\mathbf{x})\right]^2 + \frac{1}{N_v} \sum_{i=1}^{N_v}\sigma_i^2,
\end{align}
where the $(\boldsymbol{\theta}_i,\sigma_i)_{i=1}^{N_v}$ is the HMC sample of $\left(\boldsymbol{\theta},\sigma\right)$ with its posterior distribution.

\textcolor{black}{In this paragraph we study the sampling algorithm for the posterior law.
The Markov Chain Monte Carlo (MCMC) method aims at generating the terms of an ergodic Markov chain $\left( X_n \right)_{n\in\Nat}$ whose invariant measure is the target law with density $p(\mathbf{x})$ which is known up to a multiplicative constant.
This Markov chain is specially constructed for this purpose, in the sense that its transition kernel is defined such that $p$ is its unique invariant probability.
There are several possible variations of this principle, such as the Metropolis-Hastings algorithm, see \cite[sec. 6.3.2]{metropolis1953} and \cite{robert2004}.}

\textcolor{black}{The Metropolis-Hastings algorithm is as follows.
  We give ourselves a starting point $\mathbf{x}_0$ and an exploration law, i.e. a family $\left( q(\mathbf{x}^\prime, \mathbf{x}) \right)_{\mathbf{x}\in\RR^d}$ of probability densities on $\RR^d$ parametrized by $\mathbf{x}^\prime \in \RR^d$ that are easily simulated.
  We assume that we have simulated the $n$-th term of the chain $X_n$.
  \begin{enumerate}
    \item We make a proposition $X^\prime_{n+1}$ drawn according to the density law $q(X_n, \cdot)$.
    \item We calculate the acceptance rate $a(X_n, X^\prime_{n+1}) = \min(1, \rho(X_n, X^\prime_{n+1}))$ with,
    \begin{equation}
      \label{eq::MHalgorho}
      \rho(\mathbf{x}_n, \mathbf{x}^\prime_{n+1}) = \frac{p(\mathbf{x}^\prime_{n+1})q(\mathbf{x}^\prime_{n+1}, \mathbf{x}_n)}{p(\mathbf{x}_{n})q(\mathbf{x}_{n}, \mathbf{x}^\prime_{n+1})}.
    \end{equation}
    \item We draw $U_{n+1} \sim \mathcal{U}(0, 1)$.
    \item We put:
    \begin{equation}
      X_{n+1} = \left\{
        \begin{array}{ll}
          X^\prime_{n+1} & \text{if} ~~ U_{n+1} \leq a(X_n, X^\prime_{n+1}) \\
          X_n & \text{if} ~~ U_{n+1} > a(X_n, X^\prime_{n+1}) \\
        \end{array}
        \right. 
    \end{equation}
  \end{enumerate}}

We note that we only need to know $p$ up to a multiplicative constant to be able to implement the algorithm according to \cref{eq::MHalgorho}.
In the case where $q$ is symmetrical $q(\mathbf{x}^\prime, \mathbf{x}) = q(\mathbf{x}, \mathbf{x}^\prime)$, the acceptance rate is simply $\min(1,\frac{p(X^\prime_{n+1})}{p(X_n)})$.
We have a symmetrical $q$ in particular when the exploration law is Gaussian: $(q(\mathbf{x}^\prime, \mathbf{x}))_{\mathbf{x}\in R^d }$ is the density of the law $\mathcal{N} (\mathbf{x}^\prime, \sigma^2I)$ with $\sigma^2$ to be calibrated in order to have an acceptance rate that is neither too high (which means that we do not explore enough), nor too low (which means that we reject the proposition too often because it is too far).
Usually we calibrate $\sigma^2$ during the simulation to observe a constant acceptance rate of the order of $\frac{1}{4}$, for more details see \cite{gelmanWeakConvergenceOptimal1997}.

\textcolor{black}{
The Hamiltonian Monte Carlo (HMC) algorithm is a special instance of  Metropolis-Hastings algorithm because the exploration law $q(\mathbf{x}^\prime,\mathbf{x})$ is determined by a Hamiltonian dynamic, in which the potential energy depends on the target density $p$.
This model is proposed in \cite{duaneHybridMonteCarlo1987}.
One algorithm that is efficient for HMC on the No-U-Turn Sampler (NUTS). It is used in this manuscript to sample the posterior distribution of $\left(\boldsymbol{\theta},\sigma\right)$, see \cite{hoffman2014no}.
}

\section{Combining GP regression and BNN}
\label{sec::Fullmeth}
From now on we consider a multi-fidelity framework with two levels of code, high $f_H$ and low $f_L$ fidelity, as in \cite{kennedy2000predicting}.
The input is $\mathbf{x} \in \RR^d$ and the outputs of both computer code levels $f_L(\mathbf{x})$ and $f_H(\mathbf{x})$ are scalar. 
We have access to $N_L$ low-fidelity points and $N_H$ high-fidelity points, with $N_H <N_L$.
In our article we focus on the small data framework where the low-fidelity code is not perfectly known.
Under such circumstances it remains uncertainty in the low-fidelity surrogate model.
If $N_H \ll N_L$ the situation would be different and we could assume that the low-fidelity emulator is perfect.

We therefore have two surrogate modelling tools: GP regression and BNN. To do multi-fidelity with non-linear interactions the standard methods use combinations of regression methods. With our two methods we can make four combinations: GP-GP also called DeepGP in \cite{perdikaris2017nonlinear,cutajar2019deep}, GP-BNN the method proposed in our paper, BNN-GP and BNN-BNN.
The Deep GP model will be compared to the proposed method in all examples of our paper. The BNN-BNN method would be extremely expensive and very close to the full NN methods by adding the predictive uncertainty. The logic behind our choice of GP-BNN over BNN-GP is as follows: if we assume that the low-fidelity code is simpler than the high-fidelity code, then it must be approximated by a simpler model.
GP regression is a surrogate model easier to obtain and it gives a Gaussian output distribution that can be sampled easily. Whereas BNN is more complex to construct and allows for more general output distributions to be emulated.

The code output to estimate is $f_H$ with the help of low- and high-fidelity points.
As the low-fidelity code $f_L$ is not completely known a regression method with uncertainty quantification, GP regression, is used,  to emulate it, as in \cref{ssec::GPreg}.
The output of the low-fidelity GP is then integrated into the input to a BNN, described in \cref{ssec::BNN}, to predict $f_H$.

The low-fidelity surrogate model is a GP $Y_L(\mathbf{x})$ built from $N_L$ low-fidelity data points $\left(\mathbf{x}_{L,i}, f_L(\mathbf{x}_{L,i}) \right) \in \RR^d \times \RR$.
The optimisation of the hyper-parameters of the GP is carried out in the construction of the surrogate model.
The GP is characterized by a predictive mean $\mu_L(\mathbf{x})$ and a predictive covariance $C_L(\mathbf{x},\mathbf{x}^\prime)$.

To connect the GP with the BNN the simplest way is to concatenate $\mathbf{x}$ and $\mu_L(\mathbf{x})$ (the best low-fidelity predictor) as input to the BNN.
However, this does not take into account the predictive uncertainty.
Consequently, we may want to add $C_L(\mathbf{x},\mathbf{x})$ or $\sqrt{C_L(\mathbf{x},\mathbf{x})}$ to the input vector of the BNN.
The idea is that the  BNN could learn from the low-fidelity GP more than from its predictive mean only, in order to give reliable predictions of the high-fidelity code with quantified uncertainties.
We will show that the idea is fruitful, and it can be pushed even further.

We have investigated three methods to combine the two surrogate models and to transfer the posterior distribution of the low-fidelity emulator to the high-fidelity one.
We demonstrate in \cref{sec::Ex} that the best solution is the so-called Gauss-Hermite method.

\section{Transfer methods}
\label{sec::MCGPGH}
The two learning sets are $\mathcal{D}^L = \{(\mathbf{x}_i^L, f_L(\mathbf{x}_i^L)), i=1,\ldots, N_L \}$ and $\mathcal{D}^H = \{(\mathbf{x}_i^H, f_H(\mathbf{x}_i^H)), i=1,\ldots, N_H \}$ with typically $N_H <N_L$ and we do not need to assume that the sets $\{x_i^L,i=1,\ldots,N_L\}$ and $\{x_i^H,i=1,\ldots,N_H\}$  are nested.

The low-fidelity model is emulated using GP regression, as a consequence the result is formulated as a posterior distribution given ${\cal D}^L$ that has the form of a Gaussian law.

\begin{prop}
    \label{po::postDistLow}
The posterior distribution  of $Y_L(\mathbf{x})$ knowing $\mathcal{D}^L$ is the Gaussian distribution  with mean $\mu_L(\mathbf{x})$ and variance $\sigma^2_L(\mathbf{x})$ of the form (\ref{eq:meanpostL}-\ref{eq:varpostL}).
We denote its pdf by $p(y_L|{\cal D}_L, \mathbf{x})$.
\end{prop}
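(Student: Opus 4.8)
This proposition restates, in the notation of the multi-fidelity setting, the GP-regression facts recalled in \cref{ssec::GPreg}, so the plan is simply to spell out why those facts hold, namely that they are a specialization of the Gaussian conditioning identity. First I would recall that, by construction, $Y_L(\cdot)$ is \emph{a priori} a Gaussian process with mean function $\mu(\cdot)$ and covariance function $C(\cdot,\cdot)$. By the definition of a Gaussian process, for any fixed $\mathbf{x}\in\RR^d$ the finite-dimensional random vector $\bigl(Y_L(\mathbf{x}),Y_L(\mathbf{x}_1^L),\ldots,Y_L(\mathbf{x}_{N_L}^L)\bigr)$ is jointly Gaussian, with mean $(\mu(\mathbf{x}),\boldsymbol{\mu}^T)^T$ where $\boldsymbol{\mu}=(\mu(\mathbf{x}_1^L),\ldots,\mu(\mathbf{x}_{N_L}^L))^T$, and covariance the $(N_L+1)\times(N_L+1)$ block matrix with upper-left entry $C(\mathbf{x},\mathbf{x})$, off-diagonal block $\mathbf{r}(\mathbf{x})=(C(\mathbf{x},\mathbf{x}_1^L),\ldots,C(\mathbf{x},\mathbf{x}_{N_L}^L))^T$, and lower-right block ${\bf C}=(C(\mathbf{x}_i^L,\mathbf{x}_j^L))_{1\le i,j\le N_L}$.

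Next I would invoke the standard fact that if a Gaussian vector is split into two blocks, the conditional law of one block given the other is again Gaussian, with conditional mean equal to the unconditional mean corrected by the linear regression term and conditional covariance equal to the corresponding Schur complement (see, e.g., \cite[Appendix~A.2]{williams2006gaussian}). Conditioning the second block on the observed values $Y_L(\mathbf{x}_i^L)=f_L(\mathbf{x}_i^L)$ for $i=1,\ldots,N_L$ --- the event encoded by $\mathcal{D}^L$ --- then gives that $Y_L(\mathbf{x})\mid\mathcal{D}^L$ is Gaussian with mean $\mu(\mathbf{x})+\mathbf{r}(\mathbf{x})^T{\bf C}^{-1}(\mathbf{y}-\boldsymbol{\mu})$, where $\mathbf{y}=(f_L(\mathbf{x}_1^L),\ldots,f_L(\mathbf{x}_{N_L}^L))^T$; this is exactly $\mu_L(\mathbf{x})$ in the form \cref{eq:meanpostL}. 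Its variance is $C(\mathbf{x},\mathbf{x})-\mathbf{r}(\mathbf{x})^T{\bf C}^{-1}\mathbf{r}(\mathbf{x})$, which we write $\sigma_L^2(\mathbf{x})=C_\star(\mathbf{x},\mathbf{x})$ with $C_\star$ as in \cref{eq:varpostL}. Writing the univariate Gaussian density with these two parameters identifies $p(y_L\mid\mathcal{D}^L,\mathbf{x})$, and the proof is complete.

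No estimates are involved; the only point requiring care --- and hence the main (and modest) obstacle --- is that ${\bf C}$ be invertible, since otherwise the regression term and the Schur complement above are not defined. This holds whenever the kernel $C$ is strictly positive definite on the distinct design points, and in general once a nugget or observation-noise term is added to the diagonal of ${\bf C}$, as is customary in GP regression and implicit in \cref{ssec::GPreg}; I would assume this throughout. Granted invertibility, each step is an exact identity and nothing further is needed.
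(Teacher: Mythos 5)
Your proof is correct and is exactly the argument that the paper delegates to its citation of \cite[Chapter~2.2]{williams2006gaussian} (noise-free prediction): joint Gaussianity of the finite-dimensional marginals followed by the standard Gaussian conditioning identity, yielding \cref{eq:meanpostL} and \cref{eq:varpostL}. You simply write out the details the paper leaves to the reference, including the sensible remark on the invertibility of ${\bf C}$, so there is nothing to add.
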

\begin{proof}
The proof is given in \cite[chapter 2.2]{williams2006gaussian} (prediction with noise free observations).
\end{proof}
    
The posterior distribution of the high-fidelity code given the low-fidelity learning set $\mathcal{D}^L$ and the high-fidelity learning set $\mathcal{D}^H$ may have different forms depending on the input of the BNN.

\subsection{Mean-Standard deviation method and quantiles method}
In the Mean-Standard deviation method, called Mean-Std method, we give as input to the BNN the point $\mathbf{x}$ and the information usually available at the output of a GP regression, i.e. the predictive mean and standard deviation of the low-fidelity emulator at the point $\mathbf{x}$.

In this method, the input of the BNN whose output gives the prediction of the high-fidelity code at $\mathbf{x}$ is $\mathbf{x}^\text{BNN}=(\mathbf{x},\mu_L,\sigma_L)$.
The idea is that the BNN input consists of the input $\mathbf{x}$ of the code and of the mean and standard deviation of the posterior distribution of the low-fidelity emulator at $\mathbf{x}$.
The high-fidelity emulator is modelled as:
\begin{equation}
    \label{eq::GHsampling}
    Y_H(\mathbf{x}) = \mathit{BNN}_{\boldsymbol{\theta}}(\mathbf{x},\mu_L(\mathbf{x}),\sigma_L(\mathbf{x})) + \sigma\epsilon,
\end{equation}
with $\epsilon \sim \mathcal{N}(0,1)$.
We use the learning set ${\cal D}^H_{MS}=\big\{ \big( \mathbf{x}_i^H, \mu_L(\mathbf{x}_i^H), \sigma_L(\mathbf{x}_i^H), f_H(\mathbf{x}_i^H) \big), i=1,\ldots, N_H \}$  to train the BNN and get the posterior distribution of $(\boldmath{\theta},\sigma)$. Note that ${\cal D}^H_{MS}$ can be deduced from ${\cal D}^L$ and ${\cal D}^H$.

\begin{prop}
    \label{po::postDist}
 The posterior distribution of $Y_H(\mathbf{x})$ knowing $\mathcal{D}^L$ and ${\cal D}^H_{MS}$ has pdf
 \begin{equation}
 p\big(y_H | \mathbf{x}, \mathcal{D}^H_{MS}, \mathcal{D}^L \big) = \iint \frac{1}{\sqrt{2\pi} \sigma}
 \exp\Big( - \frac{(y_H-\mathit{BNN}_{\boldsymbol{\theta}}(\mathbf{x},\mu_L(\mathbf{x}),\sigma_L(\mathbf{x})))^2}{2 \sigma^2}\Big) p(\boldsymbol{\theta}, \sigma |{\cal D}^H_{MS}) d\sigma d\boldsymbol{\theta} ,
\end{equation}
        with $ p(\boldsymbol{\theta}, \sigma |{\cal D}^H_{MS}) $ the posterior pdf of the hyper-parameters of the BNN.
\end{prop}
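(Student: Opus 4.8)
The plan is to reduce the claim to the generic BNN posterior-predictive formula $(\ref{eq:postp})$ applied to the augmented input. First I would invoke \cref{po::postDistLow}: conditionally on $\mathcal{D}^L$, the maps $\mathbf{x}\mapsto\mu_L(\mathbf{x})$ and $\mathbf{x}\mapsto\sigma_L(\mathbf{x})$ are deterministic and given explicitly by $(\ref{eq:meanpostL})$--$(\ref{eq:varpostL})$. Hence, once $\mathcal{D}^L$ is fixed, the augmented input $\mathbf{x}^{\text{BNN}}=(\mathbf{x},\mu_L(\mathbf{x}),\sigma_L(\mathbf{x}))$ is a deterministic function of $\mathbf{x}$, and the set $\mathcal{D}^H_{MS}$ is a deterministic function of $\mathcal{D}^L$ and $\mathcal{D}^H$.

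Next, model $(\ref{eq::GHsampling})$ states that, given $\mathbf{x}$, $\boldsymbol{\theta}$, $\sigma$ and $\mathcal{D}^L$, the output $Y_H(\mathbf{x})$ is Gaussian with mean $\mathit{BNN}_{\boldsymbol{\theta}}(\mathbf{x},\mu_L(\mathbf{x}),\sigma_L(\mathbf{x}))$ and variance $\sigma^2$, i.e. it has conditional pdf
\[
 p\bigl(y_H\,|\,\mathbf{x},\boldsymbol{\theta},\sigma,\mathcal{D}^L\bigr)=\frac{1}{\sqrt{2\pi}\,\sigma}\exp\Bigl(-\frac{(y_H-\mathit{BNN}_{\boldsymbol{\theta}}(\mathbf{x},\mu_L(\mathbf{x}),\sigma_L(\mathbf{x})))^2}{2\sigma^2}\Bigr),
\]
which is exactly $(\ref{eq::outputs})$ with the input vector replaced by the augmented input; note that the noise $\epsilon$ at the new point $\mathbf{x}$ is independent of the training data, so $Y_H(\mathbf{x})$ is conditionally independent of $\mathcal{D}^H_{MS}$ given $(\boldsymbol{\theta},\sigma)$ and $\mathcal{D}^L$. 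Marginalising over $(\boldsymbol{\theta},\sigma)$ by the law of total probability then gives
\[
 p\bigl(y_H\,|\,\mathbf{x},\mathcal{D}^H_{MS},\mathcal{D}^L\bigr)=\iint p\bigl(y_H\,|\,\mathbf{x},\boldsymbol{\theta},\sigma,\mathcal{D}^L\bigr)\,p\bigl(\boldsymbol{\theta},\sigma\,|\,\mathcal{D}^H_{MS},\mathcal{D}^L\bigr)\,d\boldsymbol{\theta}\,d\sigma .
\]

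The last step, and the only one carrying genuine content, is to replace $p(\boldsymbol{\theta},\sigma\,|\,\mathcal{D}^H_{MS},\mathcal{D}^L)$ by $p(\boldsymbol{\theta},\sigma\,|\,\mathcal{D}^H_{MS})$: by construction the BNN is trained only on $\mathcal{D}^H_{MS}$, the prior $(\ref{eq::priorTheta})$ does not involve any data, and the likelihood of $\mathcal{D}^H_{MS}$ under $(\ref{eq::GHsampling})$ already incorporates the low-fidelity quantities $\mu_L(\mathbf{x}_i^H),\sigma_L(\mathbf{x}_i^H)$ stored in $\mathcal{D}^H_{MS}$; hence, conditionally on $\mathcal{D}^H_{MS}$, the parameters $(\boldsymbol{\theta},\sigma)$ carry no residual dependence on $\mathcal{D}^L$, and Bayes' theorem (as in $(\ref{eq:postp})$) gives $p(\boldsymbol{\theta},\sigma\,|\,\mathcal{D}^H_{MS})\propto\prod_{i=1}^{N_H}p\bigl(f_H(\mathbf{x}_i^H)\,|\,\mathbf{x}_i^H,\mu_L(\mathbf{x}_i^H),\sigma_L(\mathbf{x}_i^H),\boldsymbol{\theta},\sigma\bigr)\,p(\boldsymbol{\theta},\sigma)$. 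Substituting this together with the Gaussian conditional pdf above into the marginalisation integral yields exactly the displayed expression. I expect this conditional-independence reduction to be the main obstacle, in the sense that it is a modelling convention that must be stated explicitly rather than a computation; everything else is bookkeeping around $(\ref{eq:postp})$.
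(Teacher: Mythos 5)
Your proof is correct and follows essentially the same route as the paper's: identify the conditional law of $Y_H(\mathbf{x})$ given $(\boldsymbol{\theta},\sigma,\mathbf{x},\mathcal{D}^L)$ as the Gaussian pdf from \cref{eq::GHsampling}, then marginalise over $(\boldsymbol{\theta},\sigma)$ by the law of total probability. The only difference is that you explicitly justify replacing $p(\boldsymbol{\theta},\sigma\,|\,\mathcal{D}^H_{MS},\mathcal{D}^L)$ by $p(\boldsymbol{\theta},\sigma\,|\,\mathcal{D}^H_{MS})$ and note the conditional independence of $Y_H(\mathbf{x})$ from the training data given the parameters, both of which the paper's proof uses silently as modelling conventions.
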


\begin{proof}
    Let $p(y_H|\boldsymbol{\theta}, \sigma, \mathbf{x}, \mathcal{D}^L)$ be the probability density function (pdf) of $Y_H(\mathbf{x})$ given by \cref{eq::GHsampling}.
    This pdf can be written:
    \begin{eqnarray*}
        p(y_H|\boldsymbol{\theta}, \sigma, \mathbf{x}, \mathcal{D}^L) & = & \frac{1}{\sqrt{2\pi} \sigma}\exp\Big( - \frac{(y_H-\mathit{BNN}_{\boldsymbol{\theta}}(\mathbf{x},\mu_L(\mathbf{x}),\sigma_L(\mathbf{x})))^2}{2 \sigma^2}\Big)
    \end{eqnarray*}
    The law of total probability gives that:
    \begin{eqnarray*}
        p\big(y_H | \mathbf{x}, \mathcal{D}^H_{MS}, \mathcal{D}^L \big) &= &\iint p(y_H|\boldsymbol{\theta}, \sigma, \mathbf{x}, \mathcal{D}^L) p(\boldsymbol{\theta}, \sigma |{\cal D}^H_{MS}) d\sigma d\boldsymbol{\theta}\\
        & = & \iint \frac{1}{\sqrt{2\pi} \sigma}
        \exp\Big( - \frac{(y_H-\mathit{BNN}_{\boldsymbol{\theta}}(\mathbf{x},\mu_L(\mathbf{x}),\sigma_L(\mathbf{x})))^2}{2 \sigma^2}\Big) p(\boldsymbol{\theta}, \sigma |{\cal D}^H_{MS}) d\sigma d\boldsymbol{\theta}.
    \end{eqnarray*} 
\end{proof}

\begin{coro}
    The mean and variance of the posterior distribution of $Y_H(\mathbf{x})$ knowing ${\cal D}^H_{MS}$ and $\mathcal{D}^L$ is:
    \begin{equation}\label{eq::meangenfull}
        \mu_H(\mathbf{x}) = \iint \mathit{BNN}_{\boldsymbol{\theta}} (\mathbf{x},\mu_L(\mathbf{x}),\sigma_L(\mathbf{x})) p(\boldsymbol{\theta}, \sigma |{\cal D}^H_{MS}) d\sigma d\boldsymbol{\theta},
    \end{equation}
    and
    \begin{equation}\label{eq::vargenfull}
        C_H(\mathbf{x}) = \iint \left(\mathit{BNN}_{\boldsymbol{\theta}}(\mathbf{x},\mu_L(\mathbf{x}),\sigma_L(\mathbf{x}))^2 + \sigma^2\right) p(\boldsymbol{\theta}, \sigma |{\cal D}^H_{MS}) d\sigma d\boldsymbol{\theta},
    \end{equation}
\end{coro}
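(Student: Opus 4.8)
The plan is to derive the two moment formulas directly from the posterior pdf established in \cref{po::postDist}, treating the corollary as a routine computation of $\Mean_{\text{post}}[Y_H(\mathbf{x})]$ and $\Var_{\text{post}}[Y_H(\mathbf{x})]$ against the density $p(y_H|\mathbf{x},\mathcal{D}^H_{MS},\mathcal{D}^L)$. First I would write $\mu_H(\mathbf{x}) = \int y_H\, p(y_H|\mathbf{x},\mathcal{D}^H_{MS},\mathcal{D}^L)\, dy_H$, substitute the double-integral expression from \cref{po::postDist}, and exchange the order of integration (Fubini, justified since everything is a nonnegative integrable density times a Gaussian with all moments finite). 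The inner integral over $y_H$ is then just the mean of a $\mathcal{N}(\mathit{BNN}_{\boldsymbol{\theta}}(\mathbf{x},\mu_L(\mathbf{x}),\sigma_L(\mathbf{x})),\sigma^2)$ law, which is $\mathit{BNN}_{\boldsymbol{\theta}}(\mathbf{x},\mu_L(\mathbf{x}),\sigma_L(\mathbf{x}))$, leaving exactly \cref{eq::meangenfull}.

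For the second formula I would first compute the second moment $\Mean_{\text{post}}[Y_H(\mathbf{x})^2]$ the same way: swapping integrals, the inner $y_H$-integral becomes the second moment of the same Gaussian, namely $\mathit{BNN}_{\boldsymbol{\theta}}(\mathbf{x},\mu_L(\mathbf{x}),\sigma_L(\mathbf{x}))^2 + \sigma^2$, which after integration against $p(\boldsymbol{\theta},\sigma|\mathcal{D}^H_{MS})$ gives precisely the right-hand side of \cref{eq::vargenfull}. I note that, strictly speaking, $C_H(\mathbf{x})$ as written is the raw second moment, so either the statement identifies $C_H(\mathbf{x})$ with $\Mean_{\text{post}}[Y_H^2]$ (paralleling \cref{eq::secondmombay}), or one additionally subtracts $\mu_H(\mathbf{x})^2$; I would mirror whichever convention \cref{eq::secondmombay,eq::VarBNN} use so the corollary is consistent with the BNN section, and flag the relation $\Var_{\text{post}}[Y_H(\mathbf{x})] = C_H(\mathbf{x}) - \mu_H(\mathbf{x})^2$ explicitly.

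The only genuine subtlety — hardly an obstacle — is justifying the interchange of the $y_H$-integration with the $(\boldsymbol{\theta},\sigma)$-integration. This is handled by Tonelli's theorem for the nonnegative integrands appearing in the pdf and second-moment computations, using that $p(\boldsymbol{\theta},\sigma|\mathcal{D}^H_{MS})$ is a probability density and that $\int (y_H^2+1)\frac{1}{\sqrt{2\pi}\sigma}\exp(-(y_H-m)^2/2\sigma^2)\,dy_H = m^2+\sigma^2+1$ is finite for every fixed $(\boldsymbol{\theta},\sigma)$ with $\sigma>0$; one then needs the posterior to have $\Mean[\,\mathit{BNN}_{\boldsymbol{\theta}}^2 + \sigma^2\,]<\infty$, which I would simply assume (it holds for the bounded activation $\phi=\tanh$ and, more generally, is the implicit standing hypothesis making the moments in \cref{eq::baymeanvar,eq::secondmombay} well-defined). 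With that in place the proof is two short lines of the law of total expectation, exactly parallel to the proof of \cref{po::postDist}.
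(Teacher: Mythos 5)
Your proof is correct and follows essentially the same route as the paper's own: write the posterior moments as integrals of $y_H$ against the pdf from \cref{po::postDist}, exchange the order of integration, and read off the first two moments of the Gaussian $\mathcal{N}\big(\mathit{BNN}_{\boldsymbol{\theta}}(\mathbf{x},\mu_L(\mathbf{x}),\sigma_L(\mathbf{x})),\sigma^2\big)$, with your Tonelli justification being a harmless extra. Your side remark is also well taken: the displayed $C_H(\mathbf{x})$ is the raw second moment rather than the variance, which requires subtracting $\mu_H(\mathbf{x})^2$ (exactly as the paper does later in the estimator $\tilde{C}_H$ and in the Gauss--Hermite analogue), a term the paper's own proof silently drops when passing from $\int (y_H-\mu_H(\mathbf{x}))^2 p\,dy_H$ to the stated formula.
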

\begin{proof}
    By the definition of the mean and variance we get:
    \begin{eqnarray*}
        \mu_H(\mathbf{x}) &= & \int y_H p\big(y_H | \mathbf{x}, \mathcal{D}^H_{MS}, \mathcal{D}^L \big) d y_H, \\
        C_H(\mathbf{x}) & = & \int (y_H - \mu_H(\mathbf{x}))^2 p\big(y_H | \mathbf{x}, \mathcal{D}^H_{MS}, \mathcal{D}^L \big) d y_H.
    \end{eqnarray*}
    We replace by the expressions given in \cref{po::postDist}.
    \begin{eqnarray*}
        \mu_H(\mathbf{x}) &= & \int y_H \iint \frac{1}{\sqrt{2\pi} \sigma}\exp\Big( - \frac{(y_H-\mathit{BNN}_{\boldsymbol{\theta}}(\mathbf{x},\mu_L(\mathbf{x}),\sigma_L(\mathbf{x})))^2}{2 \sigma^2}\Big) p(\boldsymbol{\theta}, \sigma |{\cal D}^H_{MS}) d\sigma d\boldsymbol{\theta} d y_H,\\
    \end{eqnarray*}
    which gives \cref{eq::meangenfull}.
    For the variance:
    \begin{eqnarray*}
        C_H(\mathbf{x}) &= & \int (y_H- \mu_H(\mathbf{x}))^2 \iint \frac{1}{\sqrt{2\pi} \sigma}\exp\Big( - \frac{(y_H-\mathit{BNN}_{\boldsymbol{\theta}}(\mathbf{x},\mu_L(\mathbf{x}),\sigma_L(\mathbf{x})))^2}{2 \sigma^2}\Big) p(\boldsymbol{\theta}, \sigma |{\cal D}^H_{MS}) d\sigma d\boldsymbol{\theta} d y_H,\\
    \end{eqnarray*}
    which gives \cref{eq::vargenfull}.
\end{proof}
\begin{coro}
    \label{po::MeanStandardEstimator}
    Given $\mathcal{D}^L$ and $\mathcal{D}^H_{MS}$, given a sample $(\theta_i,\sigma_i)_{i=1}^{N_v}$ of the posterior distribution of $(\boldmath{\theta},\sigma)$:
    \begin{eqnarray}
        \label{eq::meanEstGen}
        \tilde{\mu}_H(\mathbf{x}) = \frac{1}{N_v}\sum_{i=1}^{N_v}BNN_{\boldsymbol{\theta}_i}(\mathbf{x},\mu_L(\mathbf{x}),\sigma_L(\mathbf{x})),
    \end{eqnarray}
    and
    \begin{eqnarray}
        \tilde{C}_H(\mathbf{x}) = \frac{1}{N_v} \sum_{i=1}^{N_v} \left(  BNN_{\boldsymbol{\theta}_i}(\mathbf{x},\mu_L(\mathbf{x}),\sigma_L(\mathbf{x}))^2 + \sigma^2_i \right) - \tilde{\mu}_H(\mathbf{x})^2,
    \end{eqnarray}
    are consistent estimators of $\mu_H(\mathbf{x})$ and $C_H(\mathbf{x})$.
\end{coro}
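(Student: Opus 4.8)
The plan is to recognise both $\tilde{\mu}_H(\mathbf{x})$ and $\tilde{C}_H(\mathbf{x})$ as ergodic averages along the HMC chain, to identify their limits with the corresponding posterior integrals via the law of large numbers, and to finish with the continuous mapping theorem; the only real work is an integrability check that justifies passing to the limit.

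First I would fix $\mathbf{x}$ and write $\mathbf{x}^{\mathrm{BNN}} = (\mathbf{x}, \mu_L(\mathbf{x}), \sigma_L(\mathbf{x}))$, a deterministic vector by \cref{po::postDistLow}. Put $g_1(\boldsymbol{\theta},\sigma) = \mathit{BNN}_{\boldsymbol{\theta}}(\mathbf{x}^{\mathrm{BNN}})$ and $g_2(\boldsymbol{\theta},\sigma) = \mathit{BNN}_{\boldsymbol{\theta}}(\mathbf{x}^{\mathrm{BNN}})^2 + \sigma^2$, and let $P = p(\boldsymbol{\theta},\sigma \,|\, {\cal D}^H_{MS})$. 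Then $\tilde{\mu}_H(\mathbf{x}) = N_v^{-1}\sum_{i=1}^{N_v} g_1(\boldsymbol{\theta}_i,\sigma_i)$ and $\tilde{C}_H(\mathbf{x}) = N_v^{-1}\sum_{i=1}^{N_v} g_2(\boldsymbol{\theta}_i,\sigma_i) - \tilde{\mu}_H(\mathbf{x})^2$. Moreover $\int g_1 \, dP = \mu_H(\mathbf{x})$ by \cref{eq::meangenfull}, and, since $\int y_H^2 \, (2\pi)^{-1/2}\sigma^{-1}\exp\!\big(-(y_H - \mathit{BNN}_{\boldsymbol{\theta}}(\mathbf{x}^{\mathrm{BNN}}))^2/2\sigma^2\big)\, dy_H = \mathit{BNN}_{\boldsymbol{\theta}}(\mathbf{x}^{\mathrm{BNN}})^2 + \sigma^2$, integrating against $P$ and using \cref{po::postDist} identifies $\int g_2 \, dP$ with the posterior second moment of $Y_H(\mathbf{x})$; hence $\int g_2 \, dP = C_H(\mathbf{x}) + \mu_H(\mathbf{x})^2$, with $C_H(\mathbf{x})$ the posterior variance.

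Next I would verify $g_1, g_2 \in L^1(P)$. From the one-hidden-layer structure \cref{eq::FLpred,eq::LLpred}, $|\mathit{BNN}_{\boldsymbol{\theta}}(\mathbf{x}^{\mathrm{BNN}})| \le \|\mathbf{w}_2\| \, \|\Phi(\mathbf{w}_1 \mathbf{x}^{\mathrm{BNN}} + \mathbf{b}_1)\| + |b_2|$, which for a bounded activation (hyperbolic tangent) is at most $c(\|\mathbf{w}_2\| + |b_2|)$ and for ReLU at most a quadratic polynomial in the entries of $\boldsymbol{\theta}$; in both cases $g_1$ and $g_2$ are polynomially bounded in $(\boldsymbol{\theta},\sigma)$. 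Since the prior \cref{eq::priorTheta}, together with the prior on $\sigma$, has moments of all orders and the likelihood $\prod_{i=1}^{N_H} p(y_i \,|\, \mathbf{x}_i, \boldsymbol{\theta}, \sigma)$ is a bounded function of $\boldsymbol{\theta}$ with light tails in $\sigma$, the posterior $P$ has moments of all orders; in particular $g_1 \in L^2(P)$ and $g_2 \in L^1(P)$.

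Finally I would invoke convergence. The sequence $(\boldsymbol{\theta}_i,\sigma_i)_{i \ge 1}$ generated by NUTS is a Metropolis--Hastings/HMC chain with invariant law $P$; assuming it is Harris ergodic — the standard working hypothesis for such samplers — the ergodic theorem yields, $P$-a.s.\ as $N_v \to \infty$, $N_v^{-1}\sum_{i=1}^{N_v} g_1(\boldsymbol{\theta}_i,\sigma_i) \to \int g_1 \, dP$ and $N_v^{-1}\sum_{i=1}^{N_v} g_2(\boldsymbol{\theta}_i,\sigma_i) \to \int g_2 \, dP$ (if the HMC output is idealised as i.i.d.\ draws from $P$, this is merely Kolmogorov's strong law). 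Therefore $\tilde{\mu}_H(\mathbf{x}) \to \mu_H(\mathbf{x})$, and by the continuous mapping theorem $\tilde{\mu}_H(\mathbf{x})^2 \to \mu_H(\mathbf{x})^2$, so $\tilde{C}_H(\mathbf{x}) \to C_H(\mathbf{x}) + \mu_H(\mathbf{x})^2 - \mu_H(\mathbf{x})^2 = C_H(\mathbf{x})$, which is the asserted consistency. I expect the ergodicity input to be the main obstacle: geometric ergodicity of HMC/NUTS on this posterior requires tail and curvature conditions on $\log P$ that would, strictly speaking, have to be checked, while the moment bound of the previous step is exactly what guarantees the limiting integrals are finite; the remainder is bookkeeping.
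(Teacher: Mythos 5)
Your proof is correct, but note that the paper offers no proof of this corollary at all: it is stated as an immediate consequence of the Monte Carlo principle, with the sampling of $(\boldsymbol{\theta},\sigma)$ by HMC/NUTS mentioned only in the surrounding text. What you have done is supply exactly the ingredients that make ``consistent'' precise: identification of the empirical averages' limits with the posterior integrals of $g_1$ and $g_2$, an integrability check exploiting the one-hidden-layer structure and the Gaussian priors, an ergodicity hypothesis on the chain (or the i.i.d.\ idealisation), and the continuous mapping theorem to handle the subtracted $\tilde{\mu}_H(\mathbf{x})^2$. Two remarks. First, your identification $\int g_2\,dP = C_H(\mathbf{x}) + \mu_H(\mathbf{x})^2$ quietly corrects the paper: as literally written, \cref{eq::vargenfull} is the posterior second moment, not the variance (the $-\mu_H^2(\mathbf{x})$ term present in the Gauss--Hermite analogue \cref{eq::varGeneral} is missing there), and your reading is the one under which $\tilde{C}_H(\mathbf{x})$ is indeed consistent for the posterior variance. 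Second, you are right that Harris (or geometric) ergodicity of NUTS on this posterior is a genuine hypothesis rather than a checked fact; the paper does not address it either, so flagging it as the standing assumption is the appropriate level of rigour here. A minor point on the integrability step: the likelihood factor $\sigma^{-N_H}$ is unbounded as $\sigma\to 0$, so ``the likelihood is bounded in $\sigma$'' is not quite accurate, though joint integrability still holds away from the measure-zero set of interpolating parameters; this does not affect the conclusion.
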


The estimators need samples of the posterior distribution of $(\boldsymbol{\theta},\sigma)$.
By the HMC method (NUTS), we get a sample of the hyperparameters $\left(\boldsymbol{\theta}_j,\sigma_j\right)$  with the posterior distribution of the high-fidelity model.

\textcolor{black}{
It is possible to construct a similar method, called quantiles method that takes as input the quantiles instead of the standard deviation.
}
The Quantiles method consists of giving the mean and two quantiles of the low-fidelity GP emulator as input to the BNN.
Assuming we want to have the high-fidelity output uncertainty at level $\alpha \%$ we take the $\alpha/2\%$ and the $(1-\alpha/2)\%$ quantiles.
The expression of the BNN input vector is $\mathbf{x}^\text{BNN} = (\mathbf{x}, \mu_L, Q_{L,\alpha/2}, Q_{L,(1-\alpha/2)})$ and the high-fidelity learning set is \newline $\mathcal{D}^H_{Q} : \left\{\left((\mathbf{x}_i^H, \mu_L(\mathbf{x}_i^L), Q_{L,\alpha/2}(\mathbf{x}_i^H), Q_{L,(1-\alpha/2)}(\mathbf{x}_i^H)), f_H(\mathbf{x}_i^H)\right), i=1,\cdots, N_H\right\}$.
This method is very similar to the Mean-Std method and only the BNN input changes.
\textcolor{black}{
The estimators of the mean and variance have the same form.}

\subsection{Gauss-Hermite quadrature}
In this section we want to transfer the information about the posterior distribution of the low-fidelity emulator by a sampling method.
The GP posterior distribution is one-dimensional and Gaussian for each value of $\mathbf{x}$.
Therefore, a deterministic method for sampling is preferable in order to limit the number of calls to the BNN.
In the following this method is called GPBNN.
We propose to sample the Gaussian distribution by a quasi Monte Carlo method using $S$ Gauss-Hermite quadrature nodes \cite[Chapter 3]{gautschi2012numerical}.
This method has been chosen because it gives the best interpolation of a Gaussian distribution. 
The samples $\tilde{f}_{L,j}(\mathbf{x})$, with $j=1,\hdots, S$, of the GP posterior distribution are constructed using the roots $z_{S,j}$ of the physicists' version of the Hermite polynomials $  H_{S}(x) = (-1)^S e^{x^2} \partial_{x}^S e^{-x^2}$, $S\in \mathbb{N}$.
For each input $\mathbf{x}$ the GP posterior law has mean $\mu_L(\mathbf{x})$ and variance $C_L(\mathbf{x},\mathbf{x})$.
\textcolor{black}{We sample $S$ times the low-fidelity GP posterior law by using the Gauss-Hermite quadrature illustrated at \cref{fig::GaussHermite}.}
Therefore, the $j$th realisation in the Gauss-Hermite quadrature formula is:
\begin{equation}
  \label{eq::lowfirand}
  \tilde{f}_{L,j}(\mathbf{x}) = \mu_{L}( \mathbf{x}) + z_{S,j} \sqrt{2} \sqrt{C_L(\mathbf{x},\mathbf{x})},
\end{equation}
and the associated weight is $p_{S,j} = \frac{2^{S-1}S!}{S^2H_{S-1}^2(z_{S,j})}$,
for $j=1,\hdots,S$.
The learning set of the BNN is  $\mathcal{D}^H_{GH}: \left\{\left(  (\mathbf{x}_i^H, \mu_{L}(\mathbf{x}_i^H),\sigma_{L}(\mathbf{x}_i^H)), f_H(\mathbf{x}_i^H)\right), i=1,\cdots, N_H\right\}$.
\textcolor{black}{
The GP covariance can be viewed in \cref{fig::GaussHermite} as a difference between the $S$ realisations of the GP posterior law.
The low-fidelity uncertainty prediction is taken into account in BNN input parameters.
}
\textcolor{black}{Thus, t}he high-fidelity emulator is modelled as:
\begin{equation}
  Y_H(\mathbf{x}) = \sum_{j=1}^S p_{S,j} \mathit{BNN}_{\boldsymbol{\theta}}(\mathbf{x},\tilde{f}_{L,j}(\mathbf{x})) +\sigma\epsilon,
\end{equation}
with $\epsilon\sim\mathcal{N}(0,1)$ and $\tilde{f}_{L,j}(\mathbf{x})$ is given by \cref{eq::lowfirand}.

\begin{figure}
  \begin{multicols}{2}
    \includegraphics[scale=0.5]{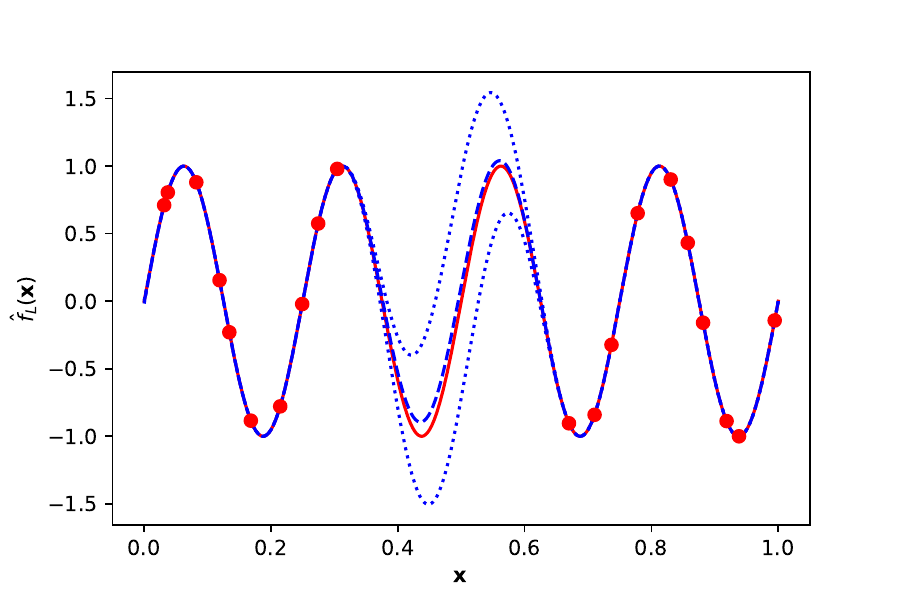} \par
    \includegraphics[scale=0.5]{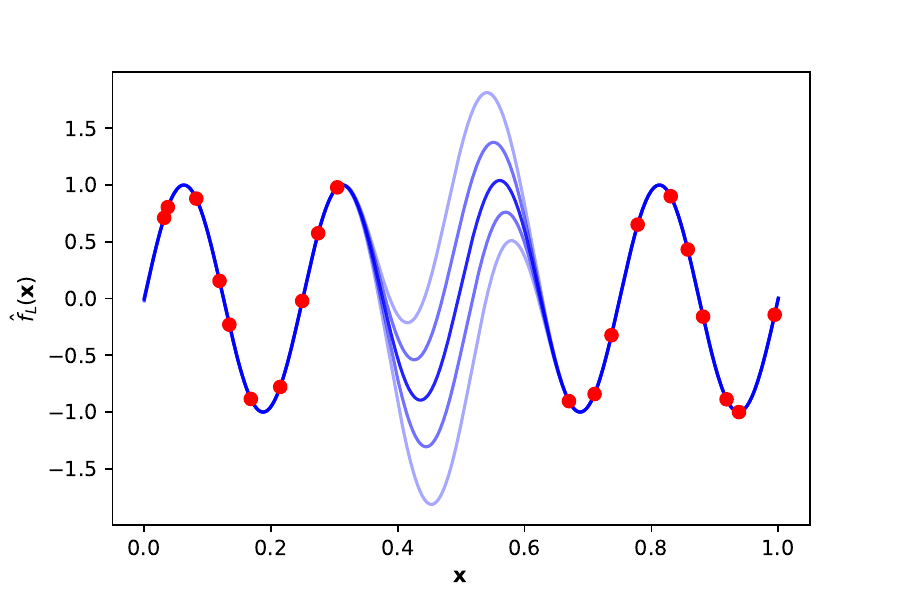} \par
  \end{multicols}
    \begin{center}
      \begin{tabular}{c | c c c c c}
        $j$ & $1$ & $2$ & $3$ & $4$ & $5$ \\
        \hline
        $z_{S=5,j}$ & $-2.02$ & $-0.96$ & $ 0$ & $0.96$ & $2.02$ \\
        $p_{S=5},j$ & $0.020$ & $0.394$ & $0.945$ & $0.394$ & $0.020$\\
      \end{tabular} 
    \end{center}
  \caption{\label{fig::GaussHermite}\textcolor{black}{Illustration of a Gauss-Hermite quadrature for a GP.
  The left curve shows the posterior law of the GP.
  In red line we have the low-fidelity function to approach which we observe the red points.
  In dashed blue the mean of the GP and in dotted blue the $ \mathcal{I_{95\%}}$ the prediction interval.
  The right curve shows the GP with the realisations of the Gauss-Hermite quadrature for $S=5$.
  The intensity of the line is linear with the weight of the quadrature.
  The table gives the approximated values of the roots and weights for the Gauss-Hermite quadrature $S=5$.}}
\end{figure}

\begin{prop}
    \label{po::postDistGH}
    The posterior distribution of $Y_H(\mathbf{x})$ knowing $\mathcal{D}^H_{GH}$ and $\mathcal{D}^L$ is
        \begin{equation}
            p\left(y_H | \mathbf{x}, \mathcal{D}^H_{GH}, \mathcal{D}^L\right) = \iint \frac{1}{\sqrt{2\pi}\sigma}\exp\left(-\frac{1}{2\sigma^2}\left(y_H-\sum_{j=1}^S p_{S,j}\mathit{BNN}_{\boldsymbol{\theta}}(\mathbf{x},\tilde{f}_{L,j})\right)^2 \right) p(\boldsymbol{\theta}, \sigma |{\cal D}^H_{RS}) d\sigma d\boldsymbol{\theta} ,
        \end{equation}
        with $ p(\boldsymbol{\theta}, \sigma |{\cal D}^H_{GH}) $ the posterior distribution of the hyper-parameters of the BNN. $\tilde{f}_{L,j}$ is given by \cref{eq::lowfirand}.
\end{prop}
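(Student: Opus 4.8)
The plan is to mirror the argument used for Proposition~\ref{po::postDist}, since the Gauss-Hermite construction changes only the form of the BNN input, not the probabilistic structure. The first step is to observe that, once we condition on $\mathcal{D}^L$, the quantities $\mu_L(\mathbf{x})$ and $C_L(\mathbf{x},\mathbf{x})$ are fixed (they are the posterior mean and variance from Proposition~\ref{po::postDistLow}), the Gauss-Hermite nodes $z_{S,j}$ and weights $p_{S,j}$ are deterministic constants, and hence each realisation $\tilde{f}_{L,j}(\mathbf{x}) = \mu_L(\mathbf{x}) + z_{S,j}\sqrt{2}\sqrt{C_L(\mathbf{x},\mathbf{x})}$ given by \cref{eq::lowfirand} is a deterministic function of $\mathbf{x}$. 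Consequently the vector $(\mathbf{x},\tilde{f}_{L,1}(\mathbf{x}),\ldots,\tilde{f}_{L,S}(\mathbf{x}))$ fed to the BNN is a deterministic function of $\mathbf{x}$ and $\mathcal{D}^L$; there is no extra randomness introduced by the quadrature, only a reweighting of several deterministic BNN evaluations.

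Second, fix $(\boldsymbol{\theta},\sigma)$. From the model $Y_H(\mathbf{x}) = \sum_{j=1}^S p_{S,j}\mathit{BNN}_{\boldsymbol{\theta}}(\mathbf{x},\tilde{f}_{L,j}(\mathbf{x})) + \sigma\epsilon$ with $\epsilon\sim\mathcal{N}(0,1)$ independent of everything else, the conditional law of $Y_H(\mathbf{x})$ given $\boldsymbol{\theta},\sigma,\mathbf{x},\mathcal{D}^L$ is Gaussian with mean $m_{\boldsymbol{\theta}}(\mathbf{x}) := \sum_{j=1}^S p_{S,j}\mathit{BNN}_{\boldsymbol{\theta}}(\mathbf{x},\tilde{f}_{L,j}(\mathbf{x}))$ and variance $\sigma^2$, i.e.\ its pdf is $\frac{1}{\sqrt{2\pi}\sigma}\exp\big(-(y_H-m_{\boldsymbol{\theta}}(\mathbf{x}))^2/(2\sigma^2)\big)$, as it is the affine image of a standard Gaussian.

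Third, apply the law of total probability with respect to the posterior distribution of $(\boldsymbol{\theta},\sigma)$ given the high-fidelity learning set $\mathcal{D}^H_{GH}$ (the statement writes $\mathcal{D}^H_{RS}$, which should read $\mathcal{D}^H_{GH}$): integrating the conditional pdf from step two against $p(\boldsymbol{\theta},\sigma\mid\mathcal{D}^H_{GH})$ yields exactly the claimed formula. The only point deserving a line of justification is that conditioning additionally on $\mathcal{D}^H_{GH}$ does not change the conditional law of $Y_H(\mathbf{x})$ given $(\boldsymbol{\theta},\sigma)$ and $\mathcal{D}^L$ — that is, $Y_H(\mathbf{x})$ is conditionally independent of $\mathcal{D}^H_{GH}$ given $(\boldsymbol{\theta},\sigma)$ and $\mathcal{D}^L$ — which holds because the BNN output at $\mathbf{x}$ depends on the data only through the parameters, exactly as in the proof of Proposition~\ref{po::postDist}. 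I do not expect a genuine obstacle: the construction is deliberately arranged so that this reduces to a routine repetition of the earlier argument, the single new ingredient being the deterministic character of the Gauss-Hermite realisations noted in step one.
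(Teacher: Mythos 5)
Your proposal is correct and follows exactly the route the paper intends: the paper gives no explicit proof of \cref{po::postDistGH}, leaving it as a verbatim repetition of the argument for \cref{po::postDist} (conditional Gaussian pdf given $(\boldsymbol{\theta},\sigma)$, then the law of total probability against $p(\boldsymbol{\theta},\sigma\mid\mathcal{D}^H_{GH})$), which is precisely what you wrote. Your added remark that the Gauss--Hermite realisations $\tilde{f}_{L,j}(\mathbf{x})$ are deterministic given $\mathcal{D}^L$, and your flagging of the typo $\mathcal{D}^H_{RS}$ for $\mathcal{D}^H_{GH}$, are both accurate.
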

\begin{coro}
    \label{co::postMeanVarGH}
    The posterior mean of $Y_H(\mathbf{x})$ is:
    \begin{eqnarray}
        \label{eq::meanGeneral}
        \mu_H(\mathbf{x}) = \iint \left(  \sum_{j=1}^S p_{S,j}\mathit{BNN}_{\boldsymbol{\theta}}(\mathbf{x}, \tilde{f}_{L,j})\right) p(\boldsymbol{\theta}, \sigma |{\cal D}^H_{GH})  d\sigma d\boldsymbol{\theta} .
    \end{eqnarray}
    The posterior variance of $Y_H(\mathbf{x})$ is:
    \begin{eqnarray}
        \label{eq::varGeneral}
        C_H(\mathbf{x}) = \iint \left(\left(  \sum_{j=1}^S p_{S,j}\mathit{BNN}_{\boldsymbol{\theta}}(\mathbf{x}, \tilde{f}_{L,j})\right)^2 + \sigma^2\right) p(\boldsymbol{\theta}, \sigma |{\cal D}^H_{GH}) d\sigma d\boldsymbol{\theta}  - \mu_H^2(\mathbf{x}).
    \end{eqnarray}
    The expression of $ \tilde{f}_{L,j} $ is given by \cref{eq::lowfirand}.
\end{coro}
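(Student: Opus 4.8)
The plan is to derive both formulas directly from the closed form of the posterior pdf in \cref{po::postDistGH}, mirroring the computation carried out for the Mean-Std method after \cref{po::postDist}. Write $m_{\boldsymbol{\theta}}(\mathbf{x}) := \sum_{j=1}^S p_{S,j}\mathit{BNN}_{\boldsymbol{\theta}}(\mathbf{x},\tilde{f}_{L,j})$ for brevity. \Cref{po::postDistGH} says that, conditionally on $(\boldsymbol{\theta},\sigma)$, the output $Y_H(\mathbf{x})$ is Gaussian with mean $m_{\boldsymbol{\theta}}(\mathbf{x})$ and variance $\sigma^2$, and that $p(y_H\mid\mathbf{x},\mathcal{D}^H_{GH},\mathcal{D}^L)$ is the mixture of these Gaussians weighted by $p(\boldsymbol{\theta},\sigma\mid\mathcal{D}^H_{GH})$.

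First I would start from the definitions $\mu_H(\mathbf{x}) = \int y_H\, p(y_H\mid\mathbf{x},\mathcal{D}^H_{GH},\mathcal{D}^L)\,dy_H$ and $\Mean_{\text{post}}[Y_H^2] = \int y_H^2\, p(y_H\mid\mathbf{x},\mathcal{D}^H_{GH},\mathcal{D}^L)\,dy_H$, substitute the expression of the pdf from \cref{po::postDistGH}, and apply Fubini's theorem to exchange the integral over $y_H$ with the integral over $(\boldsymbol{\theta},\sigma)$. For fixed $(\boldsymbol{\theta},\sigma)$ the inner integral is then simply the first, resp. second, moment of the $\mathcal{N}(m_{\boldsymbol{\theta}}(\mathbf{x}),\sigma^2)$ law, i.e. $m_{\boldsymbol{\theta}}(\mathbf{x})$, resp. $m_{\boldsymbol{\theta}}(\mathbf{x})^2+\sigma^2$. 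This gives \cref{eq::meanGeneral} for the mean and $\Mean_{\text{post}}[Y_H^2]=\iint \big(m_{\boldsymbol{\theta}}(\mathbf{x})^2+\sigma^2\big)\,p(\boldsymbol{\theta},\sigma\mid\mathcal{D}^H_{GH})\,d\sigma\,d\boldsymbol{\theta}$ for the second moment; subtracting $\mu_H^2(\mathbf{x})$ yields \cref{eq::varGeneral}.

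The one point needing care is the interchange of integration order: one checks that $\iint\!\int |y_H|^k \frac{1}{\sqrt{2\pi}\sigma}\exp\!\big(-(y_H-m_{\boldsymbol{\theta}}(\mathbf{x}))^2/2\sigma^2\big)\,dy_H\; p(\boldsymbol{\theta},\sigma\mid\mathcal{D}^H_{GH})\,d\sigma\,d\boldsymbol{\theta}<\infty$ for $k=1,2$. The inner integral is a degree-$k$ polynomial in $|m_{\boldsymbol{\theta}}(\mathbf{x})|$ and $\sigma$, and the Gaussian priors on the weights and biases, together with a bounded activation function (or, for ReLU, still the Gaussian weight priors), ensure that $m_{\boldsymbol{\theta}}(\mathbf{x})$ and $\sigma$ have finite moments of all orders under the posterior; hence Tonelli applies and the swap is legitimate. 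I also note that, since the quadrature nodes $\tilde{f}_{L,j}(\mathbf{x})$ are deterministic once $\mathbf{x}$ is fixed, the conditional variance of $Y_H(\mathbf{x})$ given $(\boldsymbol{\theta},\sigma)$ is just $\sigma^2$ and the weights $p_{S,j}$ enter only through $m_{\boldsymbol{\theta}}(\mathbf{x})$ — which is exactly why the argument is structurally identical to the Mean-Std case. I expect this integrability bookkeeping to be the only mild obstacle; the rest is the elementary Gaussian moment computation.
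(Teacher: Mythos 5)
Your proposal is correct and follows essentially the same route as the paper: the paper leaves this corollary unproved but its proof of the analogous Mean-Std corollary is exactly your computation --- substitute the mixture-of-Gaussians pdf from \cref{po::postDistGH} into the defining integrals for the mean and variance and evaluate the inner Gaussian moments. Your added Fubini/Tonelli integrability check and the remark that the quadrature nodes $\tilde{f}_{L,j}(\mathbf{x})$ are deterministic (so the weights $p_{S,j}$ enter only through the conditional mean) are sound refinements of the same argument.
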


\begin{coro}
  Given $\mathcal{D}^L$ and $\mathcal{D}^H_{GH}$, given a sample $\left( \boldsymbol{\theta}_i, \sigma_i \right)_{i=1}^{N_v}$ of the posterior distribution of $\left( \boldsymbol{\theta}, \sigma \right)$,
\begin{equation}
  \label{eq::prediction}
  \tilde{\mu}_H(\mathbf{x}) = \frac{1}{N_v}\sum_{i=1}^{N_v}\sum_{j=1}^S p_{S,j} \mathit{BNN}_{\boldsymbol{\theta}_i}(\mathbf{x}, \tilde{f}_{L,j}(\mathbf{x})),
\end{equation}
and
\begin{equation}
 \label{eq::prediction2}
  \begin{split}
    \tilde{C}_H(\mathbf{x}) = \frac{1}{N_v} \sum_{i=1}^{N_v} \left(\sum_{j=1}^S p_{S,j} \mathit{BNN}_{\boldsymbol{\theta}_i}(\mathbf{x},\tilde{f}_{L,j}(\mathbf{x}))\right)^2
    + \frac{1}{N_v} \sum_{i=1}^{N_v}\sigma_i^2 -  \tilde{\mu}_{H}^2(\mathbf{x}),
  \end{split}
  \end{equation}
  are consistent estimators of the mean and variance in \cref{co::postMeanVarGH}.
\end{coro}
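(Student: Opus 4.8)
The plan is to recognise both estimators as empirical averages of fixed functionals evaluated along the posterior sample $(\boldsymbol{\theta}_i,\sigma_i)_{i=1}^{N_v}$ and to let $N_v\to\infty$ via a law of large numbers. Fix $\mathbf{x}$ and write $g(\boldsymbol{\theta}) = \sum_{j=1}^S p_{S,j}\mathit{BNN}_{\boldsymbol{\theta}}(\mathbf{x},\tilde{f}_{L,j}(\mathbf{x}))$ and $h(\boldsymbol{\theta},\sigma) = g(\boldsymbol{\theta})^2 + \sigma^2$, where $\tilde{f}_{L,j}(\mathbf{x})$ is the deterministic number given by \cref{eq::lowfirand}. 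Then $\tilde{\mu}_H(\mathbf{x}) = \frac{1}{N_v}\sum_{i=1}^{N_v} g(\boldsymbol{\theta}_i)$ and $\tilde{C}_H(\mathbf{x}) = \frac{1}{N_v}\sum_{i=1}^{N_v} h(\boldsymbol{\theta}_i,\sigma_i) - \tilde{\mu}_H(\mathbf{x})^2$, whereas \cref{co::postMeanVarGH} identifies $\mu_H(\mathbf{x}) = \Mean_{\text{post}}[g(\boldsymbol{\theta})]$ and $C_H(\mathbf{x}) = \Mean_{\text{post}}[h(\boldsymbol{\theta},\sigma)] - \mu_H(\mathbf{x})^2$, the expectations being under $p(\boldsymbol{\theta},\sigma\mid\mathcal{D}^H_{GH})$. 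So the corollary reduces to the convergence of these three averages.

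First I would establish integrability: $g,h\in L^1$ with respect to the posterior. Since $\tilde{f}_{L,j}(\mathbf{x})$ is a fixed scalar, $\mathit{BNN}_{\boldsymbol{\theta}}(\mathbf{x},\tilde{f}_{L,j}(\mathbf{x}))$ is by \cref{eq::FLpred,eq::LLpred} a degree-two polynomial in the weights and biases composed with a bounded (tanh) or Lipschitz (ReLU) activation; combined with the Gaussian priors \cref{eq::priorTheta} and a bounded likelihood factor, the posterior has finite moments of every order in $\boldsymbol{\theta}$, and since the $\sigma$-prior is $\mathcal{N}(0,1)$ one gets $\Mean_{\text{post}}[\sigma^2]<\infty$; hence $g,h\in L^1$ (in fact $L^2$). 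Next I would apply the limit theorem to the sample: were $(\boldsymbol{\theta}_i,\sigma_i)_i$ i.i.d.\ posterior draws, the strong law of large numbers would give $\frac{1}{N_v}\sum_i g(\boldsymbol{\theta}_i)\to\mu_H(\mathbf{x})$ and $\frac{1}{N_v}\sum_i h(\boldsymbol{\theta}_i,\sigma_i)\to\Mean_{\text{post}}[h]$ almost surely; since NUTS produces instead an ergodic Markov chain with the posterior as unique invariant law (as recalled in \cref{sec::Method}), I would invoke the ergodic theorem for Harris chains, which yields the same almost-sure convergence for any $L^1$ functional. Then the continuous mapping theorem gives $\tilde{\mu}_H(\mathbf{x})^2\to\mu_H(\mathbf{x})^2$, and combining the three limits, $\tilde{C}_H(\mathbf{x}) = \frac{1}{N_v}\sum_i h(\boldsymbol{\theta}_i,\sigma_i) - \tilde{\mu}_H(\mathbf{x})^2 \to \Mean_{\text{post}}[h] - \mu_H(\mathbf{x})^2 = C_H(\mathbf{x})$, which is the claimed consistency.

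The main obstacle I anticipate is not the algebra but the justification of the law of large numbers for the HMC output: one must argue that the NUTS chain is ergodic (ideally geometrically ergodic) with the posterior as its unique stationary measure, so that Birkhoff's ergodic theorem applies to the unbounded functionals $g$ and $h$ — this rests on the integrability step above together with regularity of the sampler, and in practice is supported by citing known convergence results for HMC/NUTS. Once that is in place, the identification of $\tilde{\mu}_H,\tilde{C}_H$ in terms of $g,h$ and the passage to the limit through the continuous mapping theorem are routine, so I would keep those parts brief.
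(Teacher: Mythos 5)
Your argument is correct and is exactly the standard justification that the paper itself leaves implicit: the paper states this corollary without proof, treating it as the routine consistency of Monte Carlo averages of the integrands appearing in \cref{co::postMeanVarGH}, which is precisely your identification of $\tilde{\mu}_H$ and $\tilde{C}_H$ as empirical means of $g$ and $h$ followed by the law of large numbers (or the ergodic theorem for the NUTS chain) and the continuous mapping theorem for the $-\tilde{\mu}_H^2$ term. Your added care about integrability and about replacing the i.i.d.\ assumption by ergodicity of the HMC output goes beyond what the paper records, but it does not change the route --- it fills in the same argument the authors take for granted.
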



Note that this sampling method is different from the Quantiles method (even for $S=3$ and $\alpha\approx 0.110$).
Indeed, in the Quantiles method the input of the BNN contains the mean and quantiles of the low-fidelity code predictor whereas the Gauss-Hermite method is a sampling method in which the input of the BNN contains only one sample of the low-fidelity code predictor and the predictor is a weighted average of the BNN.

The sample $\left( \boldsymbol{\theta}_i, \sigma_i \right)$ of the posterior distribution of $\left(\boldsymbol{\theta}, \sigma\right)$ can be obtained by the HMC method (NUTS).
The choice of $S$ is a trade-off between a large value that is computationally costly and a small value that does not propagate the uncertainty appropriately.
$S=2$ is the smallest admissible value regarding the information that should be transferred.
At first glance, a large value of $S$ could be expected to be the best choice in the point of view of the predictive accuracy.
However, a too large value of $S$ degrades the accuracy of the predictive mean estimation.
This is due to large variations of $\tilde{f}_{L,i}(\mathbf{x})$ for large values of $S$.
Interesting values turn out to be between $3$ and $10$ depending on the quality of the low-fidelity emulator, as discussed in section \ref{sec::Ex}.
\Cref{fig::SMuFi} is a graphical representation of the GPBNN method.

The value of $N_v$ is chosen large enough so that the estimators in \cref{eq::prediction,eq::prediction2} have converged.
As shown in the analysis of \cref{sec::Ex} $N_v=500$ is sufficient.

We could expect the computational cost of the GPBNN method to be expensive.
The number of operations needed to compute an iteration of the HMC optimisation is proportional to $S\times N_v \times N_H$.
Because $S$ and $N_H$ are small in our context the computational cost of one realisation of the BNN is actually low.
Thus optimisation of the hyperparameters is feasible for $N_H \lesssim 100$.

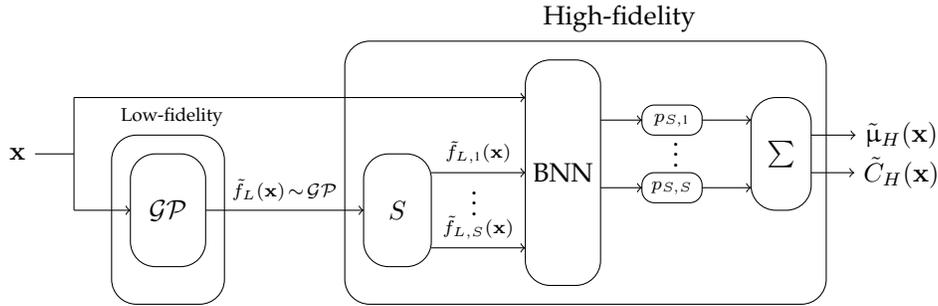
\begin{figure}
  \centering
  \begin{tikzpicture}
    \node at (1.25 , 1.75) (x) {$\mathbf{x}$};
    \node at (7.32 , 1.2) (dotsIn) {$\vdots$};
    \node at (10 , 1.85) (dotsOut) {$\vdots$};
    \node at (13, 2) (y) {$\tilde{\mu}_H(\mathbf{x})$};
    \node at (13, 1.5) (Cy) {$\tilde{C}_H(\mathbf{x})$};
    \draw [color = black, rounded corners =2ex] (2.75,0.25) rectangle(3.75,1.75) node[pos=.5] (GP) {$\mathcal{GP}$};
    \draw [color = black, rounded corners =2ex] (8,0) rectangle(9,3) node[pos=.5] {BNN};
    \draw [color = black, rounded corners =2ex] (5.6,-0.25) rectangle(12,3.25) node[above, xshift=-2.75cm] {High-fidelity};
    \draw [color = black, rounded corners =2ex] (2.5,-0.25) rectangle(4,2) node[above, xshift=-0.7cm] {\tiny{Low-fidelity}};
    \draw [color = black, rounded corners =2ex] (11,1) rectangle(11.8,2.5) node[pos=.5] {$\sum$};
    \draw [color = black, rounded corners =2ex] (5.85,0.25) rectangle(6.75,1.75) node[pos=.5] (Sampling) {$S$};
    \draw [color = black, rounded corners =1ex] (9.55,2.4) rectangle(10.35,2) node[pos=.5] (weight1) {\tiny{$p_{S,1}$}};
    \draw [color = black, rounded corners =1ex] (9.55,1.5) rectangle(10.35,1.1) node[pos=.5] (weight2) {\tiny{$p_{S,S}$}};
    \draw[->] (2, 1) -- (2.75, 1); 
    \draw (x) -- (2, 1.75);
    \draw (2, 1.75) -- (2, 2.5);
    \draw (2, 1.75) -- (2, 1);
    \draw[->] (2, 2.5) -- (8,2.5); 
    \draw[->] (3.75,1) -- node [auto] {\tiny{$ \tilde{f}_{L}(\mathbf{x})\! \sim\! \mathcal{GP} $}} (5.85,1); 
    \draw[->] (9,1.3) --  (9.55,1.3); 
    \draw[->] (9,2.2) --  (9.55,2.2); 
    \draw[->] (10.35,1.3) --  (11,1.3); 
    \draw[->] (10.35,2.2) --  (11,2.2); 
    \draw[->] (6.75,1.5) -- node [auto] {\tiny{$\tilde{f}_{L,1}(\mathbf{x})$}}(8,1.5); 
    \draw[->] (6.75,0.5) -- node [auto] {\tiny{$\tilde{f}_{L,S}(\mathbf{x})$}}(8,0.5);
    \draw[->] (11.8,2) -- (y);
    \draw[->] (11.8,1.5) -- (Cy);

  \end{tikzpicture}
  \caption{\label{fig::SMuFi}Schematic of the multi-fidelity Gauss-Hermite model. The input is a point $\mathbf{x}$. The output consists of a predictive mean $\tilde{\mu}_H(\mathbf{x})$ and a predictive variance $\tilde{C}_H(\mathbf{x})$.}
\end{figure}

\section{Experiments}
\label{sec::Ex}
In this section we present two analytic examples and a simulated one. 
The first one is a 1D function, and we consider that the low-fidelity function may be unknown in a certain subdomain.
The second one is a 2D function with noise.
Finally, we test the strategy on a more complex double pendulum system.
All the numerical experiments are carried out on a laptop (of 2017, dell precision with intel core i7) using only CPU and the running time never exceeds 2 hours.

\subsection{1D function approximation}
\label{sec::ex1D}
The low- and high-fidelity functions are:
\begin{align}
\label{eq::highex1Dquad}
  f_L(x) = \sin{8\pi x}, \quad \quad 
  f_H(x) = \big(x-\sqrt{2}\big) f_L^2(x),
\end{align}
with $x \in \left[0,1\right]$, where $f_H$ is the high-fidelity function (code) and $f_L$ the low-fidelity function (code).
\textcolor{black}{A graphical representation of these functions is given in \cref{fig:function}.}
These functions have been introduced in \cite{perdikaris2017nonlinear} and are well estimated with a DeepGP and a quadratic form of the covariance.
In this example we assume that we have access to a lot of low-fidelity data, $N_L = 100$, while the high-fidelity data is small, $N_H = 20$.
We also consider situations in which there is a segment $\bar{I} \subset \left[0,1\right]$ where we do not have access to $f_L(x)$.
The learning set for the high-fidelity code is obtained by partitioning $[0,1]$ into $N_{H}$ segments with equal lengths and then by choosing independently one point randomly on each segment with uniform distribution.
The learning set for the low-fidelity code is obtained by partitioning $[0,1] \backslash \bar{I}$ into $N_{L}$ segments with equal length and then by choosing independently one point randomly on each segment with uniform distribution.
The test set is composed of $N_{\rm T} = 1000$ independent points following a random uniform law on $[0,1]$.
\begin{figure}
  \centering
  \includegraphics[scale=0.5]{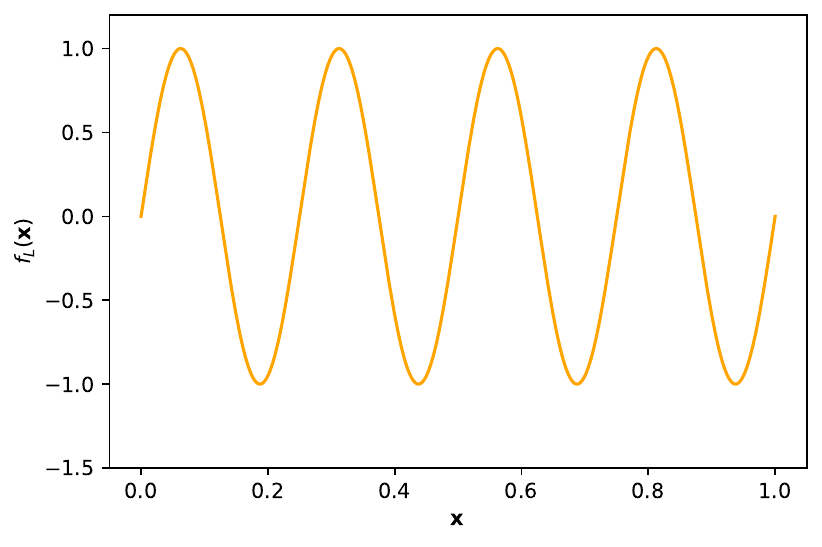}
  \includegraphics[scale=0.5]{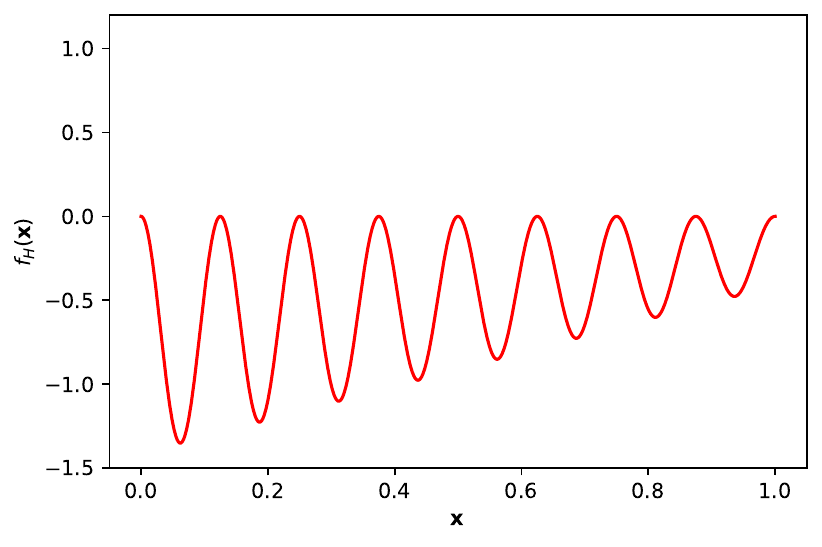}
  \caption{\label{fig:function}\textcolor{black}{Low-fidelity function on left and high-fidelity function on right. We can see that the range of variation is smaller for the high-fidelity function and its frequency is $2$ times higher than low-fidelity function.}}
\end{figure}

We denote by $\big(\mathbf{x}_{\rm T}^{(i)},f_H(\mathbf{x}_{\rm T}^{(i)})\big)_{i=1,\hdots,N_{\rm T}}$ the test set.
The error is evaluated by:
\begin{eqnarray}
  \label{eq::Q2formula}
  Q_{\rm T}^2 = 1 - \frac{\sum_{i=1}^{N_{\rm T}}\big[\tilde{\mu}_H(\mathbf{x}_{\rm T}^{(i)})-f_H(\mathbf{x}_{\rm T}^{(i)})\big]^2}{N_{\rm T}\Var_{\rm T}\left(f_H\right)},
\end{eqnarray}
with $\Var_{\rm T}(f_H) = \frac{1}{N_{\rm T}}\sum_{i=1}^{N_{\rm T}}\big[f_H(\mathbf{x}_{\rm T}^{(i)})- \frac{1}{N_{\rm T}} \sum_{j=1}^{N_{\rm T}} f_H(\mathbf{x}_{\rm T}^{(j)})\big]^2$.
A highly predictive model gives a $Q_{\rm T}^2$ close to $1$ while a less predictive model has a smaller $Q_{\rm T}^2$.
The coverage probability ${\rm CP}_\alpha$ is defined as the probability for the actual value of the function to be within the prediction interval with confidence level $\alpha$ of the surrogate model:
\begin{eqnarray}
{\rm CP}_\alpha = \frac{1}{N_{\rm T}}\sum_{i=1}^{N_{\rm T}} \mathbf{1}_{f_H(\mathbf{x}_{\rm T}^{(i)})\in \mathcal{I_{\alpha}}(\mathbf{x}_{\rm T}^{(i)})} ,
\end{eqnarray}
with $\mathbf{1}$ the indicator function and $ \mathcal{I_{\alpha}}(\mathbf{x})$ the prediction interval at point $\mathbf{x}$ with confidence level $\alpha$.
The mean predictive interval width ${\rm MPIW}_\alpha$ is the average width of the prediction intervals:
\begin{eqnarray}
{\rm MPIW}_\alpha= \frac{1}{N_{\rm T}} \sum_{i=1}^{N_{\rm T}} \big| \mathcal{I_{\alpha}}(\mathbf{x}_{\rm T}^{(i)}) \big|,
\end{eqnarray}
where $ \mathcal{I_{\alpha}}(\mathbf{x})$ is the prediction interval at point $\mathbf{x}$ with confidence level $\alpha$ and $| \mathcal{I_{\alpha}}(\mathbf{x})|$ the length of the prediction interval $ \mathcal{I_{\alpha}}(\mathbf{x})$.
The prediction uncertainty of the surrogate model is well characterized when ${\rm CP}_\alpha$ is close to $\alpha$. The prediction uncertainty of the surrogate model is small when ${\rm MPIW}_\alpha$ is small.

For the GPBNN method we obtain the interval $\mathcal{I_{\alpha}}(\mathbf{x})$ by sampling $N_v$ realisations of the posterior distribution of the random process $Y_H(\mathbf{x})$ described in \cref{sec::MCGPGH}.
The interval $\mathcal{I_\alpha}(\mathbf{x})$ is the smallest interval that contains the fraction $\alpha$ of the realisations $(y_j)_{j=1}^{N_v}$ of $Y_H(\mathbf{x})$.
For the GP 1F model and the AR(1) model the prediction interval is centered at the predictive mean and its half-length is $q_{1-\frac{\alpha}{2}}$ times the predictive standard deviation, where $q_{1-\frac{\alpha}{2}}$ is the $1-\frac{\alpha}{2}$-quantile of the standard Gaussian law, because the posterior distributions are Gaussian.
For the Deep GP model the prediction interval is obtained by Monte-Carlo sampling of the posterior distribution (with $1000$ samples).

We use GP regression with zero mean function and tensorized Mat\'ern ${5}/{2}$ covariance function
for the low-fidelity GP regression.
The implementation we use is from \cite{gpy2014}.
The optimisation for GP regression gives a correlation length of $0.108$.
For this example we choose $N_n=30$ neurons, we use the ReLU function as activation function, and we use the BNN  implementation proposed in \cite{bingham2019pyro}.
The sample size of the posterior distribution of the BNN parameter $(\boldsymbol{\theta},\sigma)$ is $N_v =500$ (see \cref{fig::NvEval}).

\begin{figure}
  \begin{center}
    \hspace{-1.70cm}
    \includegraphics[scale=0.4]{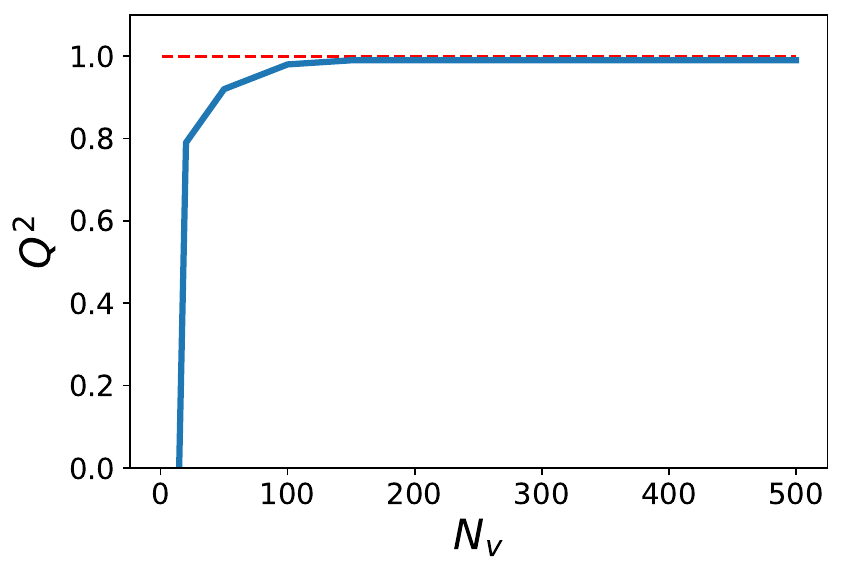} 
    \includegraphics[scale=0.4]{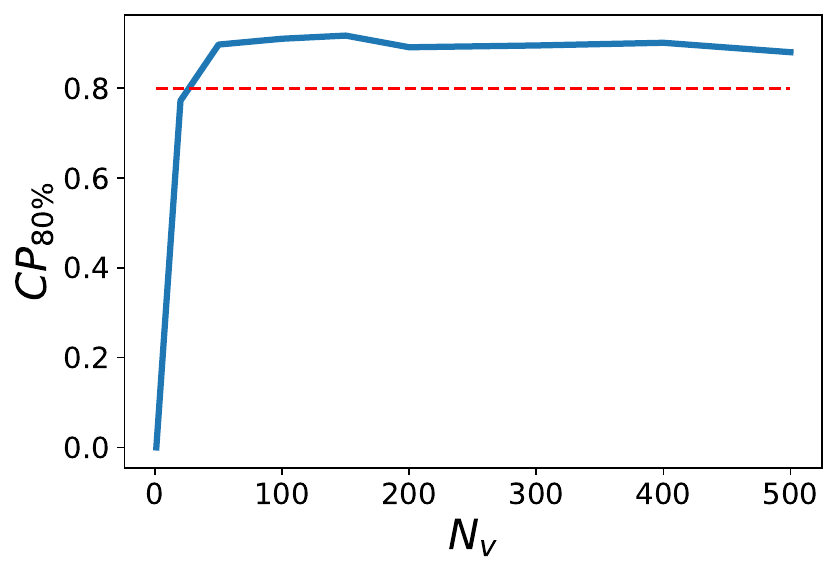} 
    \includegraphics[scale=0.4]{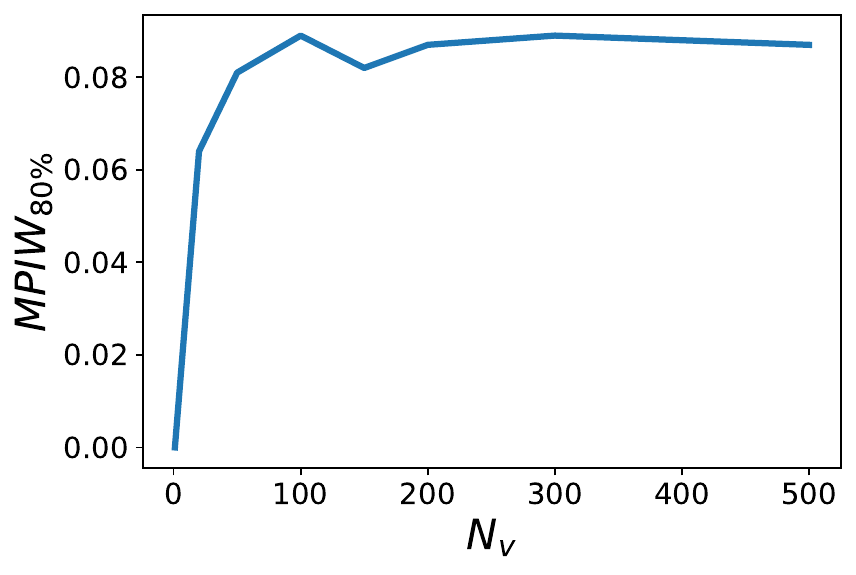}
  \end{center}
  \vspace{-0.25cm}
  \hspace{1.5cm} (a) $Q^2_T $ \hspace{4cm} (b) ${\rm CP}_{80\%}$ \hspace{3.75cm} (c) ${\rm MPIW}_{80\%}$
  \caption{\label{fig::NvEval} Error $Q_{\rm T}^2$, coverage probability at $80\%$ and ${\rm MPIW}_{80\%}$ as functions of $N_v$ for $\bar{I}=\emptyset$.}

\end{figure}

Low-fidelity surrogate models of different accuracies are considered to understand how our strategy behaves under low-fidelity uncertainty.
This is done by considering that low-fidelity data points are only accessible in $[0,1]\backslash \bar{I}$.
We have thus chosen to study three cases, a very good low-fidelity emulator with $\bar{I}=\emptyset$ (for which the $Q^2_{l\rightarrow l}$ of the low-fidelity emulator is $0.99$\textcolor{black}{, with $Q^2_{l\rightarrow l}$ the $Q^2$ of the low-fidelity model for low-fidelity prediction}), a good  emulator with $\bar{I}= [\frac{1}{3},\frac{2}{3}]$ ($Q^2_{l\rightarrow l}=0.98$) and a poor  emulator with $\bar{I}= [\frac{3}{4},1]$ ($Q^2_{l\rightarrow l}=0.84$).
\textcolor{black}{The objective of these different sets is to have more or less accurate low-fidelity models to cover different learning configurations in order to test the model in different configurations.
}

\Cref{tab::Sampling} compares for these examples the different techniques, proposed in \cref{sec::MCGPGH}, for $\bar{I} = \emptyset$.
All methods have the same efficiency in terms of $Q_{\rm T}^2$.
The uncertainties of the predictions are overestimated for all methods.
However, the Gauss-Hermite method has the best ${\rm CP}_\alpha$ and the best uncertainty interval (i.e., the smallest mean predictive interval width ${\rm MPIW}_\alpha$).
The quantiles method and the Mean-Std method also have reasonable ${\rm CP}_\alpha$, but their uncertainty intervals are larger.
This leads us to use the Gauss-Hermite method.
However, we note that all methods over-estimated the prediction interval.
We believe this is due to the high regularity of the function to be predicted.

\begin{table}
  \caption{Error $Q^2_T$, coverage probability ${\rm CP}_\alpha$  and mean predictive interval width ${\rm MPIW}_\alpha$  for $\alpha=80\%$ and for different methods of sampling. Here $\bar{I}=\emptyset$.}
  \label{tab::Sampling}
  \begin{center}
  \begin{tabular}{c|ccccc}
       & $Q^2_T$     &  ${\rm CP}_\alpha$  &  ${\rm MPIW}_\alpha$   \\
    \hline \\
    Gauss-Hermite $S=5$ & $\mathbf{0.99}$ & $\mathbf{0.88}$ & $\mathbf{0.083}$  \\
    Mean-Std & $\mathbf{0.99}$ & $0.97$ & $0.095$   \\
    Quantiles & $\mathbf{0.99}$  & $0.90$ & $ 0.105$ 
  \end{tabular}
  \end{center}
\end{table}

\begin{figure}
  \begin{center}
    \hspace{-0.30cm}
    \includegraphics[scale=0.5]{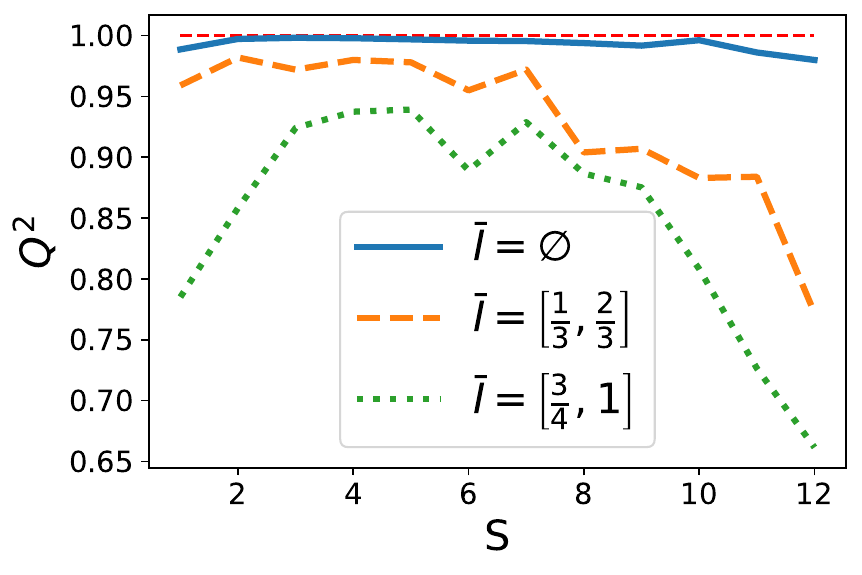} 
    \includegraphics[scale=0.5]{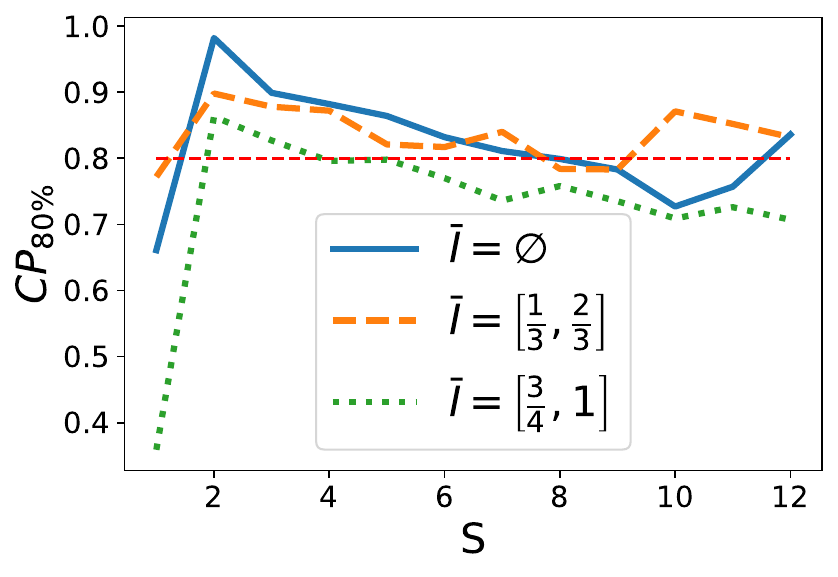} 
    \includegraphics[scale=0.5]{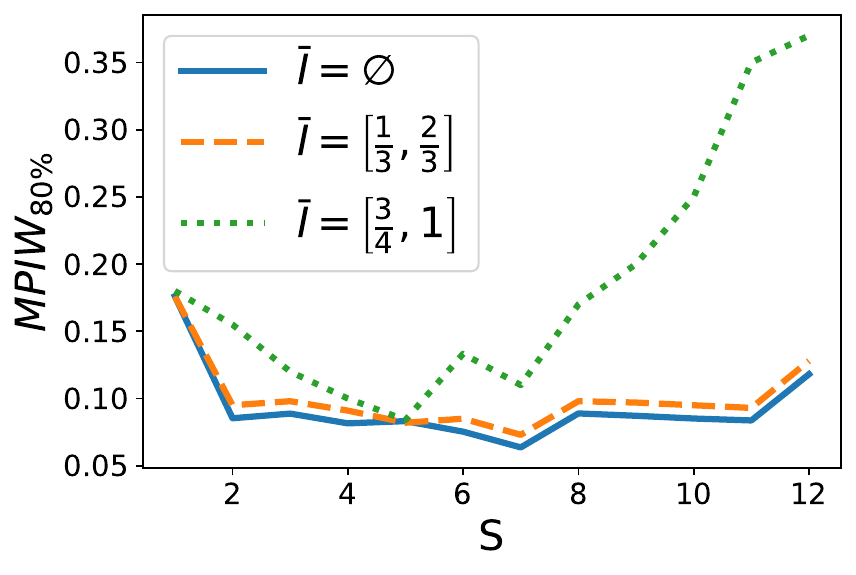}
  \end{center}
  \caption{\label{fig::SEval} Error $Q_{\rm T}^2$, coverage probability at $80\%$ and ${\rm MPIW}_{80\%}$ as functions of $S$.}

\end{figure}

In \cref{fig::SEval} we report the performances of the GPBNN method as functions of $S$ between $1$ and $12$ for different $\bar{I}$.
For $S=1$ the uncertainty is underestimated and the accuracy of the prediction is not optimal, which shows that it is important to exploit the uncertainty predicted by the low-fidelity model.
For $2\leq S\leq 5$ the prediction is good, the error is constant and the $Q_{\rm T}^2$ is maximal as seen in \cref{fig::SEval}(a).
\Cref{fig::SEval}(b) shows that the coverage probability is acceptable for $2\leq S \leq 7$.
Finally, the ${\rm MPIW}_{80\%}$ on \cref{fig::SEval}(c) is minimal for $3 \leq S\leq 7$.
The best value of $S$ is in $\left[2, 5\right]$ depending on the accuracy of the low-fidelity emulator.
\textcolor{black}{
Increasing $S$ should increase the size of the training set.
It is therefore expected that the larger $S$ is, the more accurate the model will be.
But for $S>7$ we have realisations out of the $95\%$ prediction interval.
The linkage between internal BNN non-linearities and realisations outside $95\%$ prediction interval tends to degrade the prediction of the high-fidelity model.
In this case we hypothesize that the BNN learns poorly the low probability values.
}

We have carried out a study on the best value of $N_v$.
We thought that the best value would be the largest possible.
Therefore, we tested for values of $N_v$ ranging from $1$ to $1000$ for $\bar{I}=\emptyset$.
For values of $N_v$ greater than 200 the performance is identical as a function of $N_v$.
For values below $200$ a greater variability was found.
We choose to use $N_v=500$ to have a sufficient margin.
In \Cref{fig::NvEval} we have averaged the estimators for 5 independent training sets.

We now want to compare our GPBNN method with other state-of-the-art methods. 
The single-fidelity GP method used to emulate the high-fidelity code from the $N_H$ high-fidelity points is called GP 1F.
We use the implementation in \cite{gpy2014}.
The multi-fidelity GP regression with autoregressive form introduced by \cite{kennedy2000predicting} and improved by  \cite{le2014recursive} is called autoregressive model AR(1).
The method proposed in both \cite{cutajar2019deep,perdikaris2017nonlinear} is called DeepGP, we use the implementation from \cite{perdikaris2017nonlinear} and the covariance given in \cite{cutajar2019deep} equation (11).
The method from \cite{meng2020multi} is called MBK method.
The MBK method is the combination of a fully connected NN for low-fidelity regression and BNN for high-fidelity.
We implemented it using \cite{bingham2019pyro}.
The  methods that require the minimal assumptions on the function $f_L$ and $f_H$ are the GPBNN and MBK methods.
This is the reason why they are the two methods that are compared  in \cref{fig::quadraex}.

\begin{figure}
  \begin{center}
    \includegraphics[scale=0.5]{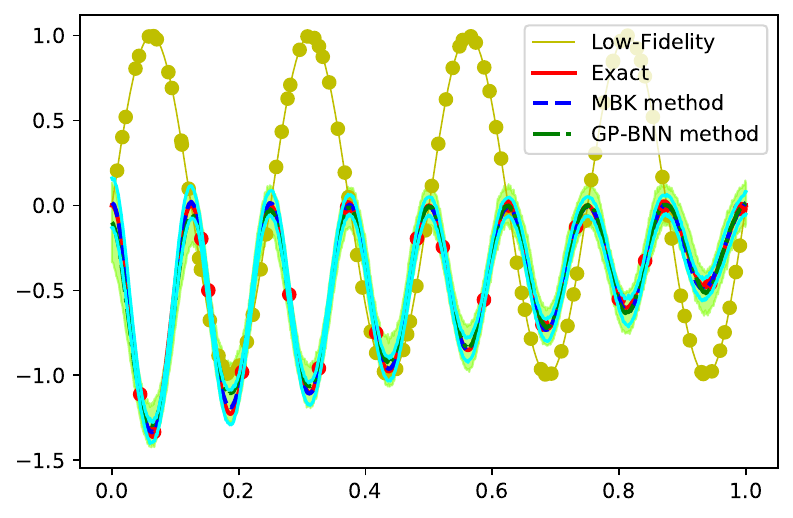}\hspace{-0.2cm}
    \includegraphics[scale=0.5]{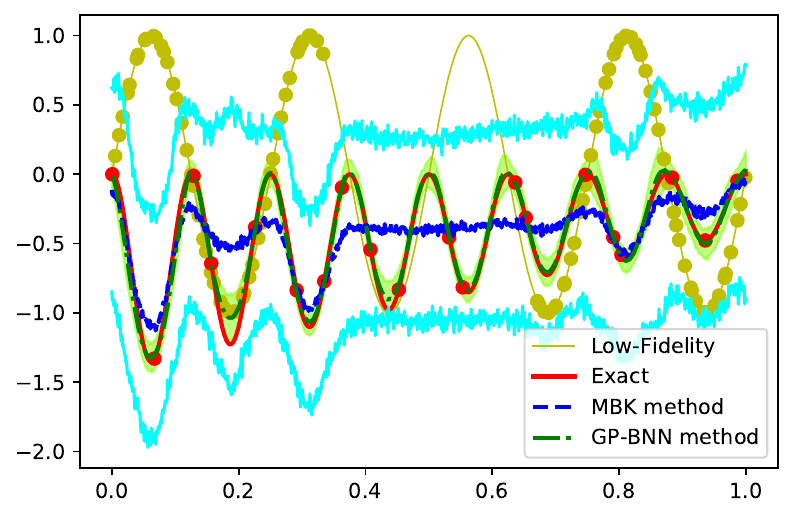}\hspace{-0.2cm}\\
    (a) $\bar{I} = \emptyset $ \hspace{5cm} (b) $\bar{I}=[\frac{1}{3},\frac{2}{3}]$ \hspace{3.75cm} \\
    \includegraphics[scale=0.5]{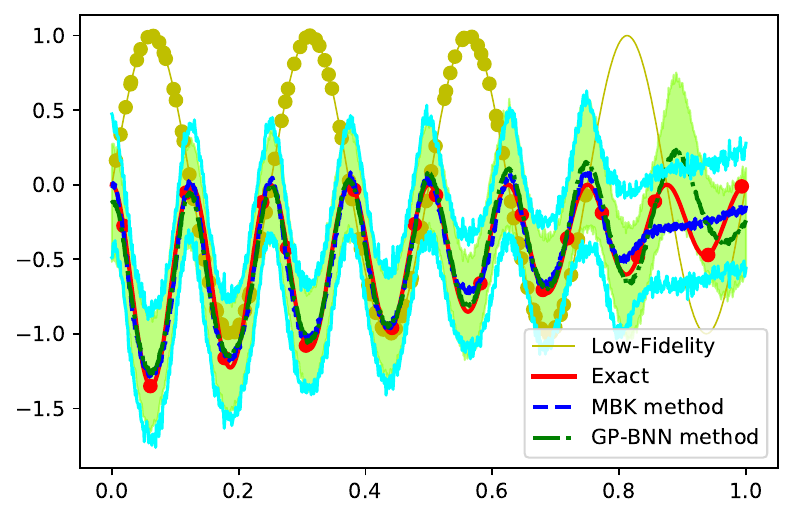}\\
    (c) $\bar{I}=[\frac{3}{4},1]$
    \end{center}
    \vspace{-0.25cm}
    
\caption{\label{fig::quadraex} Comparison between the MBK method (in blue dashed line) and the GPBNN method with $S=5$ (in green dash-dotted line) for the estimation of the function $f_H$ \cref{eq::highex1Dquad} (in red solid line).
The high-fidelity data points are in red.
The light colored lines (blue and green) plot the predictive intervals.
The uncertainty interval is given by the $\mathcal{I}_{80\%}(\mathbf{x})$ }
\end{figure}

First, the output laws for the MBK method and for the GPBNN presented in \cref{fig::quadraex} are different.
For the MBK method  the posterior law is associated to the high-fidelity BNN knowing both the high-fidelity data and the low-fidelity model.
Unlike the MBK method, the GBBNN method's output law represents the posterior law knowing high-fidelity points and the posterior distribution of the low-fidelity model built from the $N_L$ low-fidelity points.
In \cref{fig::quadraex} $S = 5$ because, as discussed above, this value seems to be the best value when the low-fidelity code is not so accurate.

The presented techniques for multi-fidelity regression are compared in \cref{tab::comptot,tab::CovProb,tab::MPIW1D}.
The GP 1F model and the multi-fidelity AR(1) model do not have good predictive properties.
For the GP 1F model it is due to the lack of high-fidelity data and for the AR(1) model it is due to the strongly nonlinear relationship between the code levels.
The three other methods perform almost perfectly when $\bar{I}=\emptyset$.
The DeepGP is outstanding when $\bar{I}=\emptyset$ but the interaction between the codes (a quadratic form) is given exactly in the covariance structure which is a strong assumption in DeepGP that is hard to verify in practical applications.
When $\bar{I}$ is non-empty the MBK method gives less accurate predictions because the method assumes strong knowledge of the low-fidelity code level.
However, its uncertainty interval seems realistic for this example although too large.
The DeepGP has reasonable errors
but \Cref{tab::CovProb,tab::MPIW1D} show that the predictive uncertainty of the DeepGP method does not fit the actual uncertainty of the prediction (it has poor coverage probability, either too large or too small).
The GPBNN method has the smallest error (best $Q^2_{\rm T}$) and it is predicting its accuracy precisely (it has good and nominal coverage probability; here $S=5$) and the predictive variance and prediction interval width are small compared to the other methods:
The GP 1F and AR(1) models have reasonable coverage probabilities but large mean predictive interval widths.
For this simple illustrative example the GPBNN method seems to be the most suitable method.

\begin{table}[h]
  \caption{$Q_{\rm T}^2$ for different methods and  segments $\bar{I}$ of missing low-fidelity values. Here $S=5$.}\label{tab::comptot}
  \begin{center}
  \begin{tabular}{c|ccccc}
    \small{$\bar{I}$ }    & GP 1F     &  AR(1)  &  DeepGP  &  MBK  &  GPBNN \\
    \hline \\
    \small{$\emptyset$} & $0.12$ & $-0.29$ & $\mathbf{0.99}$ & $\mathbf{0.99}$  & $\mathbf{0.99}$     \\
    \small{$\left[\frac{1}{3},\frac{2}{3}\right]$} & $0.13$ & $-0.34$ & $\mathbf{0.98}$ & $ 0.90$  &  $\mathbf{0.98}$     \\
    \small{$\left[\frac{3}{4},1\right]$} & $0.12$ & $-0.29$ & $0.90$ & $ 0.51$  &  $\mathbf{0.93}$     \\
  \end{tabular}
  \end{center}
\end{table}

\begin{table}
  \caption{Coverage probability ${\rm CP}_\alpha$ for $\alpha=80\%$ and for different methods and  segments $\bar{I}$ of missing low-fidelity values. Here $S=5$.}
  \label{tab::CovProb}
  \begin{center}
  \begin{tabular}{c|ccccc}
    $\bar{I}$   & GP 1F     &  AR(1)  &  DeepGP  &  MBK &  GPBNN \\
    \hline \\
    $\emptyset$ & $\mathbf{0.82}$ & $\mathbf{0.82}$ & $0.99$  & $0.76$ & $\mathbf{0.88}$    \\
    $\left[\frac{1}{3},\frac{2}{3}\right]$ & $\mathbf{0.78}$ & $\mathbf{0.79}$ &  $0.60$  & $ 0.84$ &  $\mathbf{0.83}$   \\
    $\left[\frac{3}{4},1\right]$ & $\mathbf{0.78}$ & $\mathbf{0.82}$ & $0.62$  & $0.86$ & $\mathbf{0.78}$    \\
  \end{tabular}
  \end{center}
\end{table}

\begin{table}
  \caption{Mean predictive interval width ${\rm MPIW}_\alpha$ for $\alpha=80\%$ and for different methods and  segments $\bar{I}$ of missing low-fidelity values. Here $S=5$.}
  \label{tab::MPIW1D}
  \begin{center}
  \begin{tabular}{c|ccccc}
    $\bar{I}$   & GP 1F     &  AR(1)  &  DeepGP  &  MBK &  GPBNN \\
    \hline \\
    $\emptyset$ & $0.44$ & $0.55$ & $\mathbf{0.002}$  & $0.037$ & $\mathbf{0.083}$    \\
    $\left[\frac{1}{3},\frac{2}{3}\right]$ & $0.42$ & $0.45$ &  $\mathbf{0.010}$  & $0.36$ &  $\mathbf{0.082}$   \\
    $\left[\frac{3}{4},1\right]$ & $0.44$ & $0.45$ & $0.097$  & $0.31$ & $\mathbf{0.084}$    \\
  \end{tabular}
  \end{center}
\end{table}

\subsection{2D function approximation}

The CURRIN function is a two-dimensional function, with $\mathbf{x} \in \left[0,1\right]^2$.
This function is commonly used to simulate computer experiments \cite{cutajar2019deep}.
The high- and low-fidelity functions are:
\begin{align}
  f_H(\mathbf{x}) =& \left[ 1 - \exp\left(-\frac{1}{2x_2}\right)\right]
  \frac{2300x_1^3 + 1900x_1^2 + 2092 x_1 + 60}{ 100x_1^3 + 500 x_1^2 + 4x_1 + 20},\\
  f_L(\mathbf{x}) =& \frac{1}{4} \left[f_H(x_1+\delta, x_2 +\delta) + f_H(x_1+\delta, \max\left(0,x_2-\delta\right))\right]\nonumber\\
  &+ \frac{1}{4} \left[f_H(x_1-\delta, x_2 +\delta) + f_H(x_1-\delta, \max\left(0,x_2-\delta\right))\right],
\end{align}
with $\mathbf{x} = [x_1, x_2]$ and $\delta$ the filter parameter.
In  \cite{cutajar2019deep} we have $\delta = 0.05$ and this gives very small differences between the two functions and the prediction of the high-fidelity function by the low-fidelity one has $Q^2_{l\rightarrow h} = 0.98$.
In the following we  set $\delta = 0.1$, and then $Q^2_{l\rightarrow h} = 0.87$.
An additive Gaussian noise is added to the low-fidelity code.
The noise has a zero mean and a variance equal to the empirical variance of the signal $0.08$.

In this example we also consider that the low-fidelity code is costly and we only have a small number of low-fidelity points: $N_L = 25$ and $N_H = 15$.
The high- and low-fidelity points are chosen by maximin Latin Hypercube Sampling (LHS).
The test set is composed of $1000$ independent points following a random uniform law on $[0,1]^2$.

The kernel used for GP regression low-fidelity is a Mat\'ern $5/2$ covariance function.
The predictive error for the GP regressor of the low-fidelity model is $Q^2=0.91$ using a nugget effect in the Gaussian process regression.
The BNN is defined with $N_l = 40$ neurons and the mean and variance are evaluated by \cref{eq::prediction,eq::prediction2} with $N_v = 500$.
$N_l$ could be chosen arbitrarily but the fact that we are in a small data context leads us to choose a small value.
It is possible to use cross-validation to choose $N_l$, but the computer cost would be here prohibitive.
The previous example discussed in \cref{sec::ex1D} suggests choosing $S$ between $3$ and $5$ for the low-fidelity surrogate model sampling.
In this example, due to the large low-fidelity error the model needs a large value of $S$ to account appropriately for the uncertainty and we choose $S= 5$.

All methods have been compared in \cref{tab::Currin}.
The GPBNN model in this example seems to be accurate in terms of $Q_{\rm T}^2$ and in uncertainty quantification.
It is much better than the GP 1F model (single-fidelity GP model built with  the high-fidelity data).
We presume that it is due to the lack of high-fidelity data.
AR(1) model gives better but not satisfying results.
This is expected due to the non-linearity between codes.
The results of the DeepGP method and the MBK method are worse than the one of the GP 1F in error and in uncertainty.
For the DeepGP this can be understood by the fact that the covariance is not well adapted, see \cite{cutajar2019deep}.
And for the MBK the lack of points leads to a very poor optimisation of the hyper-parameters.

\begin{table}
  \caption{Comparison of the multi-fidelity methods on the CURRIN function via $Q_{\rm T}^2$, ${\rm CP}_{80\%}$ and ${\rm MPIW}_{80\%}$.}
  \label{tab::Currin}
  \begin{center}
  \begin{tabular}{c|ccccc}
     & GP 1F     &  AR(1)  &  DeepGP & MBK &  GPBNN\\
    \hline \\
    $Q_{\rm T}^2$ & $0.73$ & $0.80$ & $0.29$ & $0.27$ & $0.88$  \\
     ${\rm CP}_{80\%}$ & $0.68$ & $0.80$ & $0.62$  & $0.57$ &  $0.80$   \\
    ${\rm MPIW}_{80\%}$ & $0.5$ & $1.0$ & $0.13$   & $1.9$ &  $0.51$    \\
  \end{tabular}
  \end{center}
\end{table}

\subsection{Double pendulum}

\begin{figure}
  \centering
  \includegraphics[scale=0.15]{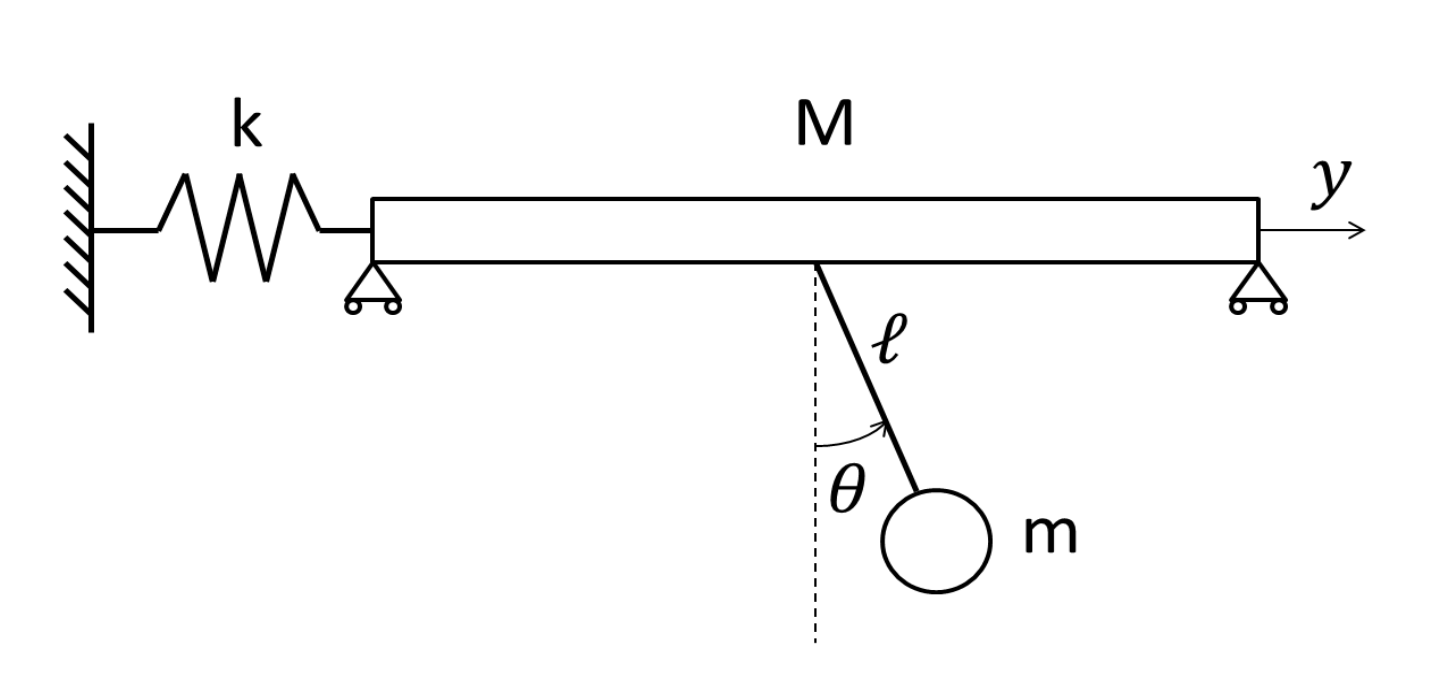}
  \caption{\label{fig::Impend} Illustration of the double pendulum.}
\end{figure}

The double pendulum system can be seen as a dual-oscillator cluster, see \Cref{fig::Impend}.
The system is presented in \cite{perrin2019adaptive}.
The inputs of the system are of dimension 5, including $(k, M, \theta, \dot{\theta},y_0)$.
The output is of dimension 1, it is the maximum along the axis $y$ of the position of the mass $m$ in the first $10$ seconds.
We have two codes: the high-fidelity code numerically solves Newton's equation. The low-fidelity code simplifies the equation, by linearisation for small angles of the pendulum motion, and solves the system.
Our goal is to build a surrogate model of the high-fidelity code using $N_L=100$ and $N_H=20$, with maximin LHS sampling.
The input parameters are  the mass $M \in [10, 12]$, the spring stiffness $k\in[1,1.4]$, the initial angle of the pendulum $\theta_0 \in [\frac{\pi}{4},\frac{\pi}{3}]$, the initial derivative $\dot{\theta_0}\in[0,\frac{1}{10}]$  and the  initial position of the mass $y_0 \in[0,0.2]$.
The fixed parameters are $\dot{y}_0=0$, the gravitational acceleration $g=9.81$, the length of the pendulum $l=2$ and its mass $m=0.5$.
The output is the maximum in time of the amplitude of the mass $m$.
The error is computed on a test set, different for each learning set, of $64$ samples uniformly distributed on the input space.
To evaluate each model we use $5$ independent learning and test sets.

\textcolor{black}{The GP regression on low-fidelity is performed without trend with a Mat\'ern $5/2$ as kernel and the hyper-parameters are estimated with maximum likelihood, using \cite{gpy2014}.}
The $Q^2$ \textcolor{black}{(given in \cref{eq::Q2formula})} for the low-fidelity surrogate model for the GP is $0.98$.
\textcolor{black}{The model is efficient to predict the low-fidelity code output, but the prediction interval is under-evaluated with coverage probability interval of $69\%$ and mean predictive interval width of $0.041$.}
The BNN is defined with \textcolor{black}{the number of neurons} $N_l = 30$.
\textcolor{black}{
The BNN is one hidden layer fully connected with Gaussian priors for weights and bias, as described in \cref{ssec::BNN}.
Therefore, the number of parameters in the BNN is $N_l\times d + 2 \times N_l + 1  = 211$.
We will estimate the posterior distribution of the weights and bias using Hamiltonian Monte Carlo (HMC) algorithm with No-U-Turn Sampler (NUTS).
This relatively small number of parameters allows us to use Pyro's NUTS algorithm (see \cite{bingham2019pyro}) with standard parameters.
The number of estimation samples for mean and prediction interval is $N_v= 500$.
}
\textcolor{black}{The high-fidelity code output is model by GPBNN.
A Gauss-Hermite sample size $S=5$ is chosen according to the results of \cref{sec::ex1D}.}

The results are presented in \Cref{tab::pendulum}.
The prediction of the MBK method is not accurate compared to all the other methods.
We think that this is due to the small data set regarding the dimension of the BNN's input.
However, the uncertainty of prediction is still accurate even if the uncertainty interval is large compared to the other methods (i.e. the coverage probability is close to the target value).
This result is very surprising for us in regard with the poor quality of the low-fidelity model, that has a $Q^2_{l\rightarrow l}$ of $0.7$. 
The DeepGP model shares the best predictive error with GPBNN.
The single fidelity and the AR(1) models display slightly larger errors.
The ${\rm CP}_{80\%}$ values are in the target area for GP1F and AR(1) but they are associated with large prediction intervals.
The DeepGP clearly underestimates the uncertainty of its predictions.
${\rm CP}_{80\%}$ value is good for the GPBNN and close to the target value.
Moreover, the uncertainty interval is the smallest of all methods.
On this real life example the GPBNN is competitive compared to other state-of-the-art methods.

\begin{table}
  \caption{Comparison of the multi-fidelity methods on the pendulum example via $Q_{\rm T}^2$, ${\rm CP}_{80\%}$ and ${\rm MPIW}_{80\%}$.}
  \label{tab::pendulum}
  \begin{center}
  \begin{tabular}{c|ccccc}
     & GP 1F     &  AR(1)  &  DeepGP & MBK &  GPBNN\\
    \hline \\
    $Q_{\rm T}^2$ & $0.93$ & $0.94$ & $\mathbf{0.95}$ & $0.54$ & $\mathbf{0.95}$  \\
     ${\rm CP}_{80\%}$ & $\mathbf{0.81}$ & $\mathbf{0.78}$ & $0.62$  & $0.88$ &  $\mathbf{0.80}$   \\
    ${\rm MPIW}_{80\%}$ & $0.154$ & $0.146$ & $0.069$   & $0.859$ &  $\mathbf{0.101}$    \\
  \end{tabular}
  \end{center}
\end{table}

\section{Conclusion}
\label{sec::Disc}
Our main focus in this paper is to give the Gaussian Process regression posterior distribution of a low-fidelity model as input to a Bayesian neural network for multi-fidelity regression.
Different methods are proposed and studied to transfer the uncertain predictions of the low-fidelity emulator to the high-fidelity one, which is crucial to obtain minimal predictive errors and accurate predictive uncertainty quantification.
The Gauss-Hermite quadrature method is shown to significantly improve the predictive properties of the BNN.
The conducted experiments show that the GPBNN method is able to process noisy and real life problems.
Moreover, the comparison with some state-of-the-art methods for multi-fidelity surrogate model highlight the precision in prediction and in uncertainty quantification.

It is possible to extend the GPBNN method to a hierarchical multi-fidelity framework with more than two levels of codes sorted by increasing accuracy.
The natural approach is to consider the output of a low-fidelity metamodel as an input to the next metamodel.
The problem that arises from this naive extension is that the input sample size increases from one metamodel to the other one. 
To solve this problem, two different methods can be considered.
For simplicity, let us consider the case of three code levels.
The first method is to consider that the metamodel for the lowest fidelity level carries no uncertainty.
The low-fidelity metamodel prediction is then added to the low fidelity inputs of a GPBNN modeling the two highest fidelity code levels.
The second method consists in approximating the mid-fidelity output uncertainty of the BNN as Gaussian.
This means modeling the system by one GP for the low-fidelity level and two different BNNs for the other fidelity levels.
It becomes possible, with the Gaussian approximation, to use less expensive sampling methods such as the Gauss-Hermite quadrature.
Of course other methods could be considered to control the sample size.
We also anticipate that the GPBNN method could be extended beyond the hierarchical case of a sequence of simulators ranked from lowest to highest fidelity.  
Indeed, it should be possible to deal with a general Markov case in which the different fidelity levels are connected via a directed acyclic graph \cite{ji2021graphical}.

The interest of combining regression method for multi-fidelity surrogate modelling is not to be proved, but this paper adds the heterogeneity of models for multi-fidelity modelling.
To be able to combine heterogeneous models into one model and to consider the uncertainty between them is one of the keys to adapt the multi-fidelity surrogate model to a real-life regression problem.

The number of elements in the learning set is not a problem any more thanks to \cite{rulliere2018nested}.
This approach could be used for the GP part of the GPBNN.
With more time in the training the BNN part will be able to deal with many points, even if this ability is less critical because $N_L \gg N_H$.
Consequently, the GPBNN can be extended in order to tackle larger data sets.

Existing autoregressive models and Deep GP can only be used for low-dimensional outputs.
We wish to extend the method to high-dimensional outputs.
Dimension reduction techniques have already been used as principal component analysis or autoencoder, as well as tensorized covariance methods \cite{perrin2019adaptive} that remain to be extended to the multi-fidelity context.
However, neural networks are known to adapt to high-dimensional outputs.
We should further investigate how to build multi-fidelity surrogate models with functional input/output in the context of small data.
The ability to construct models that are tractable for high-dimensional input and output is key for further research.














\bibliographystyle{IJ4UQ_Bibliography_Style}

\bibliography{references}
\end{document}